\begin{document}

\title{A Particle Algorithm for Mean-Field Variational Inference}\blfootnote{Author names are sorted alphabetically.}

\author{Qiang Du\thanks{Department of Applied Physics and Applied Mathematics, and Data Science Institute, Columbia University.}
	\and Kaizheng Wang\thanks{Department of Industrial Engineering and Operations Research, and Data Science Institute, Columbia University.}
	\and Edith Zhang\thanks{Department of Applied Physics and Applied Mathematics, Columbia University.}
	\and Chenyang Zhong\thanks{Department of Statistics, Columbia University.}
}

\date{December 2025}

\maketitle

\begin{abstract}
Variational inference is a fast and scalable alternative to Markov chain Monte Carlo and has been widely applied to posterior inference tasks in statistics and machine learning. A traditional approach for implementing mean-field variational inference (MFVI) is coordinate ascent variational inference (CAVI), which relies crucially on parametric assumptions on complete conditionals. We introduce a novel particle-based algorithm for MFVI, named PArticle VI (PAVI), for nonparametric mean-field approximation. We obtain non-asymptotic error bounds for our algorithm. To our knowledge, this is the first end-to-end guarantee for particle-based MFVI.
\end{abstract}

\noindent{\bf Keywords: } Variational inference, Wasserstein gradient flow, particle algorithm, mean-field approximation, McKean--Vlasov equation.

\section{Introduction}\label{sec: Introduction}

Variational inference (VI) is a popular Bayesian inference method for approximating intractable posterior distributions \citep{jordan1999introduction, wainwright2008graphical, blei2017variational}. The approximation is obtained by solving a constrained optimization problem:
\begin{equation}\label{eq: vi optimization problem}
	\min_{ q\in \cQ} \KL(q \| \pi) \,,
\end{equation} 
where $\pi \in \mathcal{P}(\R^n)$ is the posterior distribution, $\cQ \subseteq \mathcal{P}(\R^n)$ is some set of probability distributions, and $\KL(\cdot\|\cdot)$ is the Kullback--Leibler divergence. A common choice for $\cQ$ is the set of product measures, i.e., $\cQ = \big\{\prod_{i=1}^n q_i, \; q_i \in \mathcal{P}(\R)\big\}$, which is called the \textit{mean-field family}. The corresponding variational problem for this choice of $\cQ$ is called mean-field variational inference (MFVI). Compared to Markov chain Monte Carlo (MCMC) methods, which can have long burn-in periods, the optimization approach of VI allows for faster and more scalable inference in practice \citep{blei2017variational}.

Optimization problems over spaces of probability distributions can in theory be solved with a gradient flow by endowing the space of probability distributions with the 2-Wasserstein metric $\mathcal{W}_2$ and using a discrete scheme \citep{ambrosio2005gradient, santambrogio2017euclidean}. The use of $\mathcal{W}_2$--gradient flows to solve the variational problem \eqref{eq: vi optimization problem} has led to developments in theoretical understanding of VI \citep{lambert2022variational, yao2022mean, diao2023forward, Lac23} as well as the unconstrained version of \eqref{eq: vi optimization problem} whose solution is the exact posterior \citep{garcia2020bayesian}. 
Furthermore, a seminal work \cite{jordan1998variational} introduces a close connection between $\mathcal{W}_2$--gradient flows and PDEs: the discrete iterative scheme that describes the $\mathcal{W}_2$--gradient flow converges, in the small step-size limit, to the solution to a Fokker--Planck equation \cite[Theorem 5.1]{jordan1998variational}. The solution to the Fokker--Planck equation, in turn, describes the law of a particle evolving according to a stochastic differential equation (SDE). The differential equation characterization of the $\mathcal{W}_2$--gradient flow opens up avenues for computation and theoretical analysis.

Since VI operates in a subset $\cQ$ of the $\mathcal{W}_2$ space, efforts have been made to describe gradient flows for different choices of $\cQ$. One challenge is to prove that the $\mathcal{W}_2$--gradient flow constrained to $\cQ$ is still a gradient flow. %In other words, the challenge is to prove that the gradient flow is closed with respect to $\cQ$. 
When this is shown, theoretical guarantees follow \citep{lambert2022variational,yao2022mean, Lac23}.
In the case of MFVI, a recent paper \cite{Lac23} constructs a $\mathcal{W}_2$--gradient flow constrained to the submanifold of product measures. This gradient flow is the solution to a nonlinear Fokker--Planck equation, which is a system of coupled PDEs. The SDE corresponding to the PDEs is a McKean--Vlasov equation obtained through independent projections of diffusions. The paper also specifies the convergence rate of the gradient flow to the MFVI solution.

\paragraph{Contributions}

In this paper, we introduce a particle-based algorithm for MFVI, which we term PArticle VI (PAVI). The algorithm is built upon the SDE description of MFVI and stochastic approximation.
Under standard regularity assumptions, we provide an upper bound on the 2-Wasserstein distance between the particle approximation and the solution to \eqref{eq: vi optimization problem}. Notably, our convergence guarantee is non-asymptotic and holds for any number of particles and total time duration. To our knowledge, this is the first end-to-end guarantee for particle-based MFVI.

\paragraph{Related work} \label{sec: related work} 

We outline recent theoretical advances in VI based on gradient flows in the $\mathcal{W}_2$ space as follows. 
When $\cQ$ is the set of Gaussians, the resulting form of VI is called Gaussian VI. The set of Gaussians endowed with the $\mathcal{W}_2$ distance is also called the Bures--Wasserstein (BW) space \citep{bures1969extension}, which is a subset of the $\mathcal{W}_2$ space. By studying $\mathcal{W}_2$--gradient flows constrained to the BW space, \cite{lambert2022variational} propose a stochastic gradient descent algorithm for Gaussian VI---which discretizes the gradient flow---and provide theoretical guarantees, and \cite{diao2023forward} develop a forward-backward algorithm along with convergence rates for Gaussian VI. These works suggest extensions to the case when $\cQ$ is the set of mixtures of Gaussians, which corresponds to an essentially nonparametric $\cQ$. See \cite{yi2023bridging, huix2024theoretical} for further developments.

In the mean-field case where $\cQ$ consists of product distributions, \cite{yao2022mean} establish convergence guarantees for a coordinate-wise $\mathcal{W}_2$--gradient flow for MFVI in Bayesian models, and they propose two algorithms for practical implementation: a particle algorithm based on the Langevin SDE, and a function approximation method using neural networks. Their particle algorithm suffers from cumulative numerical errors that persist in the long-time limit, see Lemma H.1 and Theorem D.2 therein. Also, the error incurred by particle approximation is not analyzed. \cite{Lac23} shows that MFVI can be described by a $\mathcal{W}_2$--gradient flow constrained to the submanifold of product distributions and establishes continuous-time convergence guarantees. The difficulty in developing an efficient algorithm is also discussed. \cite{JCP23} propose the approximation of the MFVI solution through optimization over polyhedral subsets of the Wasserstein space, and introduces projected gradient descent algorithms with theoretical guarantees. Aside from those algorithmic and computational developments, there has also been recent work on the statistical accuracy of the solution to the MFVI problem for approximating the true posterior $\pi$. We refer the reader to \citep{zhang2020convergence, mukherjee2022variational, lacker2024mean} and the references therein for details. 

Our convergence result resembles those from the literature on Langevin Monte Carlo (LMC), see \cite{dalalyan2017theoretical, durmus2017nonasymptotic, cheng2018convergence, durmus2019analysis, balasubramanian2022towards}. LMC simulates an SDE whose stationary distribution is proportional to $e^{-V(\cdot)}$, where $V$ is a given potential. Non-asymptotic theoretical guarantees for LMC are given under the condition that the potential $V$ is continuously differentiable, strongly convex, and has a Lipschitz gradient. We refer the reader to \cite{chewi2024} for a survey.

Our algorithm is built on the framework of gradient flows and particle algorithms. Gradient flows and particle algorithms have recently been applied to a wide range of problems in statistics and optimization. For example, \cite{saeedi2017variational} propose a particle algorithm for approximating discrete distributions with the approximating family given by distributions with at most $K$ atoms (where $K$ is fixed). \cite{cheng2024particle} propose particle algorithms for distribution approximation using generalized Wasserstein gradient flow of the Kullback--Leibler divergence. Their algorithms do not impose explicit specification of an approximating family $\mathcal{Q}$ and rely on neural networks to estimate certain vector fields used for updating. Their theoretical analysis relies on assumptions about neural network approximation accuracy and does not consider the error arising from particle approximation. We also refer the reader to \cite{liu2016stein, gallouet2017jko, chizat2018global, liu2019understanding, lu2019accelerating, korba2020non, wang2020information, wang2022accelerated, lu2023birth, kook2024sampling,  liu2024towards, yan2024learning, YCY} and the references therein for further developments.  

\paragraph{Outline} In Section \ref{sec: MFVI}, we state the problem setup and discuss some relevant properties of MFVI. In Section \ref{sec: a particle algo}, we lay out the McKean--Vlasov equation corresponding to the MFVI problem, and introduce a particle algorithm. In Section \ref{sec: theoretical analysis}, we state our main theorem, which provides a convergence guarantee for our algorithm. In Section \ref{sec: proofs}, we prove the theoretical results stated in Section \ref{sec: theoretical analysis}. Some technical lemmas used in the proof are given in Section \ref{sec: technical lems}. Section \ref{sec: Discussions} is devoted to discussions.

\paragraph{Notation}

For any $d\in\mathbb{Z}_{+}=\{1,2,\cdots\}$, we denote $[d]=\{1,2,\cdots,d\}$ and let $\bI_d$ represent the identity matrix of size $d$. For any $d\in\mathbb{Z}_{+}$, we denote by $\mathcal{P}(\mathbb{R}^d)$ the set of probability distributions on $\mathbb{R}^d$, and by $\Pac(\RR^{d})$ the family of probability distributions on $\RR^{d}$ that are absolutely continuous with respect to the Lebesgue measure. We denote the Euclidean norm by $\|\cdot\|_2$. For any two symmetric matrices $\bA$ and $\bB$ of the same size, we write $\bA \preceq \bB$ or $\bB \succeq \bA$ if $\bB-\bA$ is positive semidefinite. For any random variable $X$, we denote its law by $\Law(X)$. For any $\bmu \in\mathbb{R}^d$ (where $d\in\mathbb{Z}_{+}$) and $d\times d$ positive definite matrix $\bSigma$, we denote by $N(\bmu,\bSigma)$ the multivariate normal distribution with mean vector $\bmu$ and covariance matrix $\bSigma$. For any two quantities $A, B>0$, we write $A=O_K(B)$ if $A\leq C B$ for a positive constant $C$ that only depends on $K$. For any function $f:\mathbb{R}^d\rightarrow\mathbb{R}$ (where $d\in\mathbb{Z}_{+}$), we denote its gradient and Hessian matrix by $\nabla f$ and $\nabla^2 f$, respectively, and denote $\partial_i f(\bx):=\frac{\partial f(\bx)}{\partial x_i}$ for any $\bx =(x_1,\cdots,x_d)\in\mathbb{R}^d$ and $i\in [d]$.

\section{Mean-field variational inference}\label{sec: MFVI}

In this section, we introduce the optimization problem underlying mean-field variational inference and discuss its relevant properties.

\subsection{Problem setup}

We define the \textit{mean-field family} of probability distributions as
\begin{align}
	\cQ
	= \bigg\{ q^1 \otimes \cdots \otimes q^m :~ q^i \in \Pac ( \RR ) ,~~\forall  i \in [m] \bigg\}  \,.
	\label{eqn-Q}
\end{align}
We see that $\cQ \subseteq \Pac ( \RR^{m} )$ consists of product distributions. Given a distribution $p \in \Pac ( \RR^{m} )$ whose density function is known up to a normalizing constant, \emph{mean-field variational inference} (MFVI) aims to approximate $p$ by a product distribution in $\cQ$, by solving the problem
\begin{align}
	\min_{ q \in \cQ } \KL ( q \| p ) \,.
	\label{eqn-MFVI}
\end{align}
Here $\KL ( q \| p ) = \EE_{ q } \log ( \frac{\rd q}{\rd p} ) $ is the Kullback--Leibler divergence (also known as the relative entropy) from $p$ to $q$. As a convention, $\KL ( q \| p )  = + \infty$ if $q$ is not absolutely continuous with respect to $p$. An optimal solution to \eqref{eqn-MFVI} is called a \emph{mean-field approximation} of $p$. MFVI is usually implemented using coordinate ascent variational inference (CAVI) \citep{bishop2006pattern, blei2017variational}. In addition to the mean-field assumption, CAVI requires each complete conditional of the underlying model to belong to an exponential family. To improve the expressive power of the variational family, nonparametric variants of CAVI have been proposed. For instance, $\mathcal{Q}$ can be the set of Gaussian mixtures \citep{gershman2012nonparametric, lambert2022variational}. In this paper, we work with the entire mean-field family \eqref{eqn-Q}.

\begin{remark}[Block MFVI]\label{remark-block}
	%Each element in \eqref{eqn-Q} is the product of $m$ univariate distributions. 
	Prior works on MFVI have also used products of multivariate distributions to form \emph{block mean-field families} \citep{bishop2006pattern}. Given positive integers $\{ d_i \}_{i=1}^m$ and a target distribution $p \in \Pac ( \RR^{d_1 + \cdots + d_m} )$, block MFVI approximates $p$ by an optimal solution to the problem \eqref{eqn-MFVI} with
	\begin{align}
		\cQ = \bigg\{ q^1 \otimes \cdots \otimes q^m :~ q^i \in \Pac ( \RR^{d_i} ) ,~~\forall  i \in [m] \bigg\}  \,.
		\label{eqn-Q-block}
	\end{align}
	While our main discussion focuses on the mean-field family \eqref{eqn-Q} for notational simplicity, we will also present algorithmic and theoretical results for the general scenario.
\end{remark}

% Key properties

\subsection{Key properties of MFVI}\label{subsec: properties of MFVI}

In the following, we discuss several key properties of the optimization problem \eqref{eqn-MFVI}. Throughout the rest of this paper, we assume that the target distribution $p \in \Pac (\RR^{m})$ has density $Z^{-1}e^{-V(\cdot)}$, where $V:\mathbb{R}^m\rightarrow \mathbb{R}$ is a potential function and $Z$ is a normalizing constant.

We first introduce the following definitions. 

\begin{definition}[Entropy]
	For any $d \in \ZZ_+$ and $\mu \in \Pac (\RR^{d})$, we define the entropy of $\mu$ as $     H ( \mu ) = - \int_{ \RR^{d} } \mu (\bx) \log \mu (\bx) \rd \bx$.
	% define its entropy as
	% \begin{equation*}
		%     H ( \mu ) = - \int_{ \RR^{d} } \mu (\bx) \log \mu (\bx) \rd \bx.
		% \end{equation*}
\end{definition}

\begin{definition}[Marginals]
	For any $\bx = (x_1,\cdots, x_m) \in \RR^{m}$ and $i\in [m]$, define
	\begin{equation*}
		\bx_{-i} = ( x_1, \cdots, x_{i - 1}, x_{i + 1}, \cdots, x_m ).
	\end{equation*}
	% $\bx_{-i} = ( x_1, \cdots, x_{i - 1}, x_{i + 1}, \cdots, x_m )$. 
	For any $q =q^1 \otimes \cdots \otimes q^m \in \cQ$ and $i\in[m]$, define 
	% $q^{-i}  = q^1 \otimes \cdots q^{i-1} \otimes q^{i+1} \otimes \cdots q^m$. 
	\begin{equation*}
		q^{-i}  = q^1 \otimes \cdots q^{i-1} \otimes q^{i+1} \otimes \cdots q^m.
	\end{equation*}
\end{definition}
In words, the notation $-i$ means ``all components except for the $i$-th one.'' 
\begin{definition}[Approximate potential]
	For any $\mu \in \cP (\RR^{m-1})$ and $i\in[m]$, let
	\begin{equation}\label{eq: def V bar}
		\bar{V}_i ( \, \cdot \,, \mu ) =  \EE_{ \bx_{-i} \sim \mu }  V (  x_1, \cdots, x_{i - 1} , \, \cdot \,, x_{i + 1}, \cdots, x_m ) .
	\end{equation}
\end{definition}
The function $\bar{V}_i$ is an approximation (using a single component and a distribution $\mu$) to the potential $V: \mathbb{R}^m \rightarrow \mathbb{R}$. It will be crucial for defining the mean-field particle dynamics. In this paper, any derivative of $ \bar{V}_i$ is taken with respect to its first argument.

For any distribution $q\in\Pac(\RR^m)$, the Kullback--Leibler divergence from $p$ to $q$ can be rewritten as
\begin{align*}
	& \KL ( q \| p ) =  \EE_{ \bx \sim q } \log \bigg( \frac{\rd q}{\rd p} (\bx) \bigg)  = \EE_{\bx \sim q} V ( \bx ) - H (q) +\log(Z).
\end{align*}
Note that for a product distribution $q\in \mathcal{Q}$, we have $\EE_{\bx \sim q} V ( \bx ) =  \EE_{x_i \sim q^i}  \bar{V}_i ( x_i , q^{-i} )$ and $ H(q) = H( q^i ) + H( q^{-i} )$. Therefore, for $q\in \mathcal{Q}$, the Kullback--Leibler divergence can be written as a function of the $i$-th component $q^i$ as 
\begin{align*}
	\KL ( q \| p ) = \EE_{x_i \sim q^i} \bar{V}_i  ( x_i , q^{-i} ) - H ( q^i ) - H ( q^{-i} ) +\log(Z)
	, \qquad \forall i \in [m]\, .
\end{align*}

For each $i \in [m]$, define the transform $\cT_i: \Pac(\RR^{m-1}) \rightarrow \Pac(\RR)$ as 
\begin{equation*}
	\cT_i(\mu) = \frac{ 
		e^{- \bar{V}_i  ( \, \cdot \,, \mu )  }}{\int_{ \RR } e^{- \bar{V}_i  ( x , \mu )  } \rd x} \,,
\end{equation*}
which is an approximation of $p$ when all but one of the components of the argument of $V$ are distributed according to $\mu$. Using this notation, we have
%that maps any $\mu \in \Pac(\RR^{m-1})$ to a probability distribution on $\RR$ with density function $ \frac{ e^{- \bar{V}_i  ( \, \cdot \,, \mu )  }}{\int_{ \RR } e^{- \bar{V}_i  ( x , \mu )  } \rd x} $. 
\begin{align}
	\KL ( q \| p ) = \KL \left( q^i \|  \cT_i (q^{-i}) \right) - H ( q^{-i} )+\log(Z)-\log\bigg(\int_{ \RR } e^{- \bar{V}_i  ( x , q^{-i} )  } \rd x\bigg) .
	\label{eqn-KL-coordinate}
\end{align}
If we minimize the Kullback--Leibler divergence $\KL ( q \| p )$ while keeping $q^{-i}$ fixed, then the optimal $q^i $ is clearly $\cT_i (q^{-i}) $. Hence, the optimality of 
\begin{equation*}
	q_* \in \argmin_{q \in \cQ} \KL (q \| p)
\end{equation*}
implies the optimality of the $i$-th component $q_*^i  = \cT_i ( q_*^{-i} )$ for every $ i \in [m]$. We immediately obtain the following fixed point characterization.

\begin{lemma}[Fixed point characterization]\label{lem-fixed-point}
	$q_*$ is a fixed point of transform $\cT:~ \cQ \to \cQ$, $q \mapsto \cT_1 (q^{-1}) \otimes \cdots \otimes \cT_m (q^{-m})$.
\end{lemma}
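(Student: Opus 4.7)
The plan is to exploit the coordinate-wise decomposition of the KL divergence given in \eqref{eqn-KL-coordinate} and reduce the fixed-point claim to the basic fact that $\KL(\cdot\|\nu)$ is uniquely minimized at $\nu$.

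Fix any index $i\in[m]$ and think of the divergence $\KL(q\|p)$ as a function of $q^i$ alone, holding $q^{-i}$ fixed at $q_*^{-i}$. Looking at \eqref{eqn-KL-coordinate}, the only term on the right-hand side that depends on $q^i$ is $\KL(q^i \| \cT_i(q^{-i}))$; the entropy term $H(q^{-i})$, the constant $\log(Z)$, and the log-partition term $\log\int e^{-\bar V_i(x,q^{-i})}\rd x$ all depend only on $q^{-i}$. Therefore, among all product measures whose non-$i$ marginals equal $q_*^{-i}$, the KL to $p$ is minimized precisely when $q^i$ minimizes $\KL(q^i \| \cT_i(q_*^{-i}))$, which by nonnegativity of the KL divergence and its vanishing only at equality forces $q^i = \cT_i(q_*^{-i})$.

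Since $q_*$ is a global minimizer over $\cQ$, it is in particular a minimizer along each such coordinate slice, so the previous paragraph yields $q_*^i = \cT_i(q_*^{-i})$ for every $i\in[m]$. Assembling these equalities across $i$ gives $q_* = \cT_1(q_*^{-1})\otimes\cdots\otimes\cT_m(q_*^{-m}) = \cT(q_*)$, which is the desired fixed-point identity.

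The only place that needs a moment of care is checking that the partial KL decomposition \eqref{eqn-KL-coordinate} is well-defined at $q_*$, namely that the normalizing integral $\int_{\RR} e^{-\bar V_i(x, q_*^{-i})}\rd x$ is finite so that $\cT_i(q_*^{-i})$ is a bona fide probability density; this is implicit in the problem setup (indeed, if the integral were infinite the objective would be $-\infty$, contradicting $q_*$ being a minimizer attaining a finite value). Beyond this sanity check the argument is essentially a one-line consequence of the nonnegativity of relative entropy, so I do not anticipate any real obstacle.
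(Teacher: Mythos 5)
Your argument is exactly the one the paper gives (immediately preceding the lemma statement): reduce to the coordinate-wise decomposition \eqref{eqn-KL-coordinate}, observe the only $q^i$-dependent term is $\KL(q^i\|\cT_i(q^{-i}))$, invoke nonnegativity of relative entropy to conclude $q_*^i=\cT_i(q_*^{-i})$ for each $i$, and assemble. The extra sanity check about finiteness of the normalizing integral is a harmless and reasonable addition, but the route is the same.
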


\section{A particle algorithm for mean-field variational inference}\label{sec: a particle algo}

In this section, we introduce a particle algorithm for MFVI, which we term PArticle VI (PAVI). In Section \ref{fp-diff}, we formulate the variational inference problem in terms of Fokker--Planck equations and the corresponding diffusion processes. Then, based on this formulation, we introduce the particle algorithm for practical implementation.

\subsection{Fokker--Planck equations and diffusion processes}\label{fp-diff}

We start by considering a sub-problem of \eqref{eqn-MFVI}: 
\begin{center}
	\emph{How to optimize $\KL (q \| p)$ with respect to $q^i$ while keeping $q^{-i}$ fixed?}
\end{center}
\noindent In light of \eqref{eqn-KL-coordinate}, this sub-problem amounts to solving for a univariate distribution $\cT_i (q^{-i}) \propto e^{- \bar{V}_i  ( \, \cdot \, , q^{-i} )  }$. We fix $q^{-i}$, and define
\begin{equation*}
	f (\, \cdot \, ) = \bar{V}_i  ( \, \cdot \, , q^{-i} )  \,.
\end{equation*}
According to the general results by \cite{jordan1998variational}, the gradient flow for the Kullback--Leibler divergence $\KL ( \, \cdot \, \| \cT_i (q^{-i}) )$ with respect to the 2-Wasserstein metric is the solution to the Fokker--Planck equation
\begin{equation}\label{eq: fpe}
	\begin{cases}
		& \partial_t \rho (x, t) = \partial_x [ f'(x)  \rho(x, t) ] + \partial^2_{xx} \rho(x, t) ,
		\\
		& \rho(\, \cdot \,, 0) = q^i (\, \cdot \,) .
	\end{cases}
\end{equation}
Below is a formal definition of the aforementioned metric, which will be used throughout our analysis.
\begin{definition}[2-Wasserstein metric]
	For any $d \in \ZZ_+$ and $\mu, \nu \in \cP (\RR^d)$, the {\bf 2-Wasserstein distance} between $\mu$ and $\nu$ is 
	\begin{align*}
		\cW_2 ( \mu, \nu ) = \inf_{\gamma \in \cC (\mu, \nu) } \sqrt{  \EE_{ ( \bx, \by )  \sim \gamma } \| \bx - \by \|_2^2 } \,,
	\end{align*}
	where $\cC (\mu, \nu)$ is the set of joint distributions on $\RR^d \times \RR^d$ with $\mu$ and $\nu$ as their marginals. Any $\gamma \in \cC (\mu, \nu)$ is called a coupling between $\mu$ and $\nu$. %The {\bf Wasserstein space} $( \cP_2 ( \RR^d ) ,  \cW_2 )$ is the space $\cP_2 (\RR^d )$ equipped with the $\cW_2$ metric.
\end{definition}

The Fokker--Planck equation is deeply connected to a stochastic differential equation: the solution $\rho(\, \cdot \,, t)$ to \eqref{eq: fpe} is the law of the diffusion process
\begin{align}
	\begin{cases}
		& 	\rd Y_t = - f' ( Y_t  ) \rd t + \sqrt{2} \rd B_t , \qquad t \geq 0
		\\
		& Y_0 \sim q^i ,
	\end{cases}
\end{align}
where $\{ B_t \}_{t \geq 0}$ is a standard Brownian motion that is independent of $Y_0$. 
By simultaneously updating all the marginal distributions $\{ q^i \}_{i=1}^m$ according to the above diffusion, we get a McKean--Vlasov process: 
\begin{align}\label{eqn-MFVI-WGF}
	\begin{cases}
		& \rd Y_{t, i} = -  \bar{V}_i' ( Y_{t, i} , \Law ( Y_{t, -i} )  ) \rd t + \sqrt{2} \rd B_t^i , \qquad i \in [m],\, t \geq 0  \\
		& Y_0 \sim q^1 \otimes \cdots \otimes q^m \,,
	\end{cases}.
\end{align}
where $\bar{V}_i$ is defined as in \eqref{eq: def V bar}, $Y_t = ( Y_{t, 1} , \cdots, Y_{t, m} )$, and $\{ ( B_{t, 1}, \cdots , B_{t, m} ) \}_{t \geq 0}$ is an $m$-dimensional standard Brownian motion that is independent of $Y_0$. The trajectory $t \mapsto \Law ( Y_t ) $ is the Wasserstein gradient flow for optimizing $\KL ( \, \cdot \, \| p )$ over the mean-field family $\cQ$ \citep{Lac23}. Note that the process $\{ Y_t \}_{t \geq 0}$ has independent coordinates because the $m$-dimensional Brownian motion has independent coordinates and the other coordinates $Y_{t,-i}$ are integrated out in the drift term. 
It is also worth pointing out that if $Y_0 \sim q_*$, then $Y_t \sim q_*$ holds for all $t \geq 0$.

\subsection{PArticle VI}

We are ready to present the PArticle VI (PAVI), a particle algorithm for mean-field variational inference. To set the stage, we first introduce some convenient notations.

\begin{definition}[Empirical measure of an array of particles]\label{defn-particles}
	Let $S_N$ be the set of all permutations of $[N]$. For any array $\bA \in \RR^{m \times N}$, denote by $A^{i, j}$ its $(i, j)$-th entry; let $\bA^{i, :} = ( A^{i, 1} , \cdots , A^{i, N} )$ be the $i$-th row, $\bA^{:, j} = ( A^{1, j} , \cdots , A^{m, j} )$ be the $j$-th column, and $\bA^{-i, j} =  ( A^{1, j} , \cdots , A^{i-1, j} , A^{i+1, j} , \cdots , A^{m, j} )$ be the $j$-th column of $\bA$ with the $i$-th entry removed. For each $i \in [m]$, let $q_{\bA}^i = 	\frac{1}{N} \sum_{j = 1}^N \delta_{ A^{i,j} }$. Define $q_{\bA} = q_{\bA}^1 \otimes \cdots \otimes q_{\bA}^m$. 
\end{definition}

\begin{remark}
	For each $i\in[m]$, $q^i_{\bA}$ is the empirical measure of $\bA^{i,:}$. $q_{\bA} \in \cP (\RR^m)$ is the product measure with $\{  q^i_{\bA} \}_{i=1}^m$ as its marginals. Hence, it has up to $N^m$ atoms.
\end{remark}

Our first version of PAVI, \Cref{gd algo}, can be viewed as a direct discretization of the McKean--Vlasov process (\ref{eqn-MFVI-WGF}).

\begin{algorithm}[h] %[t]
	{\bf Input:} Function $V:~\RR^m \to \RR$, initial value $\bX_0 \in \RR^{m \times N}$, step-size $h > 0$, number of iterations $T \in \ZZ_+$.\\
	%	\texttt{\# Initialization}\\
	%	{\bf For $i = 1,\cdots, m$:}\\
	%	\hspace*{.6cm} 	Draw $N$ i.i.d.~samples $\{ x_{0}^{i j} \}_{j=1}^N$ from $q_{0}^i$. \\
	{\bf For $n = 0,1,\ldots, T - 1$:}\\
	\hspace*{.6cm} {\bf For $i = 1,\cdots, m$:} % \qquad \textit{// can be parallized}
	\\
	\hspace*{1.2cm} Draw a random vector $\bxi_{n}^{i,:}$ from $N ( 0, \bI_N )$, the $N$-dimensional standard normal distribution. \\
	\hspace*{1.2cm} {\bf For $j = 1,\cdots, N$:} \\
	\hspace*{1.8cm} Compute $X_{n+1}^{i,j} = X_{n}^{i,j} - h  \bar{V}_i' ( X_{n}^{ i,j } , q^{-i}_{ \bX_n }  ) + \sqrt{ 2 h } \xi_{n}^{ i,j } $.\\
	{\bf Output:} $ \bX_T $.
	\caption{A gradient descent version of PAVI}
	% GD for MFVI
	\label{gd algo}
\end{algorithm}

If the columns of $\bX_0$ are i.i.d.~samples from a probability distribution over $\RR^m$, $N \to \infty$ and $h \to 0$, then \Cref{gd algo} becomes the Wasserstein gradient flow in \cite{Lac23}. 
Meanwhile, we note that \Cref{gd algo} is intuitive but impractical: the gradient evaluation is prohibitively expensive due to the cost of averaging over $N^{m-1}$ terms. For computational reasons, we propose a second version of PAVI, presented in \Cref{sgd algo} below, which involves a stochastic approximation of the gradient term.

\begin{algorithm}[h] %[H]
	{\bf Input:} Function $V:~\RR^m \to \RR$, initial value $\bX_0 \in \RR^{m \times N}$, step-size $h > 0$, batch size $B \in \ZZ_+$, number of iterations $T \in \ZZ_+$.\\
	%	\texttt{\# Initialization}\\
	%	{\bf For $i = 1,\cdots, m$:}\\
	%	\hspace*{.6cm} 	Draw $N$ i.i.d.~samples $\{ x_{0}^{i j} \}_{j=1}^N$ from $q_{0}^i$. \\
	{\bf For $n = 0,1,\ldots, T - 1$:}\\
	%\hspace*{.6cm} \texttt{\# Sampling}\\
	\hspace*{.6cm} 	Draw $B$ i.i.d.~samples $\{ \bz_n^{:, b} \}_{b=1}^B$ from $q_{ \bX_n }$.\\	
	%\hspace*{.6cm} \texttt{\# Stochastic gradient descent}\\
	\hspace*{.6cm} {\bf For $i = 1,\cdots, m$:}  %\qquad \texttt{\# can be parallelized}
	\\
	\hspace*{1.2cm} Define a function $g_{n}^i ( \, \cdot \,) = \frac{1}{B} \sum_{b=1}^{B} \partial_i  V ( 
	z_{n}^{1,b}, \cdots, z_{n}^{( i-1),b}, \cdot, z_{n}^{(i+1),b}, \cdots, z_{n}^{m,b}
	)$.\\
	\hspace*{1.2cm} Draw a random vector $\bxi_{n}^{i,:}$ from $N ( \mathbf{0}, \bI_N )$, the $N$-dimensional standard normal distribution.\\
	\hspace*{1.2cm} {\bf For $j = 1,\cdots, N$:} \\
	\hspace*{1.8cm} Compute $X_{n+1}^{i,j} = X_{n}^{i,j} - h g_n^i ( X_{n}^{ i,j }   ) + \sqrt{ 2 h } \xi_{n}^{ i,j } $.\\
	{\bf Output:} $ \bX_{T} $.
	\caption{PAVI}
	\label{sgd algo}
\end{algorithm}

\Cref{gd algo} is the limiting case of \Cref{sgd algo} (PAVI) as $B \to \infty$. The finite batch size in PAVI ensures its computational efficiency. The output $\bX_T$ from \Cref{sgd algo} is associated with $q_{\bX_{T}}$, which is a product distribution that approximates the solution $q_*$ to (\ref{eqn-MFVI}). In the next section, we will prove the convergence of $q_{\bX_T}$ towards $q_*$ in the $\mathcal{W}_2$ distance.

\begin{remark}[Block PAVI]\label{remark-block-PAVI}
	To extend \Cref{sgd algo} for the block MFVI introduced in \Cref{remark-block}, one would use $\bX_n \in \RR^{(d_1 + \cdots + d_m) \times N}$ and operate on the $m$ blocks of coordinates: $X_n^{i, j}$ consists of the entries of $\bX_n$ in the $(1+ \sum_{k=1}^{i-1} d_k)$-th through $\sum_{k=1}^{i} d_k$-th rows and the $j$-th column. The partial derivative $\partial_i$ in the function $g_n^i$ should be replaced with the gradient with respect to the $i$-th block of arguments.
	
	One can even further generalize the above algorithm by using an irregular array $ \{ X_n^{i, j} \}_{i \in [m], j \in [N_i] }$, where each block has its own number of particles. This can be helpful when the marginals of the block mean-field approximation behave very differently.
\end{remark}

\section{Theoretical analysis}\label{sec: theoretical analysis}

We now provide theoretical guarantees for Algorithm \ref{sgd algo}, the particle algorithm with stochastic gradient approximation, and its extension to the block case. Recall from Section \ref{subsec: properties of MFVI} that the target distribution $p\in\Pac(\RR^m)$ has density $Z^{-1}e^{-V(\cdot)}$. We mainly work under the commonly used assumption below.

\begin{assumption}[Strong convexity and smoothness]\label{assumption-regularity}
	There exist constants $0 < \alpha \leq L < \infty$ such that for any $\bx\in\RR^m$, we have $\alpha \bI_m \preceq \nabla^2 V (\bx) \preceq L \bI_m$.
\end{assumption}

The following result shows the existence and uniqueness of the MFVI solution $q_*$, and establishes the exponential convergence of the law of $Y_t$ in \eqref{eqn-MFVI-WGF} towards $q_*$.

\begin{lemma}[Theorems 1.1 of \cite{lacker2024mean} and Theorem 2.3 of \cite{Lac23}]\label{lem-solution}
	Under Assumption \ref{assumption-regularity}, the optimization problem \eqref{eqn-MFVI} has a unique solution $q_*$, and
	\[
	\cW_2 ( \Law  (Y_t ) , q_* ) \leq e^{- \alpha t} \cW_2 ( \Law  (Y_0 ) , q_* ), \qquad \forall t \geq 0.
	\]
	%We have the following results under Assumption \ref{assumption-regularity}:
	%\begin{itemize}
	%    \item[(a)] The optimization problem \eqref{eqn-MFVI} has a unique solution $q_*$.
	%    \item[(b)] For any $t\geq 0$, we have $    \cW_2 ( \Law  (\bY_t ) , q_* ) \leq e^{- \alpha t} \cW_2 ( \Law  (\bY_0 ) , q_* )$.
	%\begin{align*}
	%    \cW_2 ( \Law  (\bY_t ) , q_* ) \leq e^{- \alpha t} \cW_2 ( \Law  (\bY_0 ) , q_* ).
	%\end{align*}
	%\end{itemize}
\end{lemma}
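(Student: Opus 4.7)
The plan is to handle parts (a) and (b) together, since the exponential contraction in (b) will readily yield uniqueness in (a). The two main tools are the fixed-point characterization from Lemma 2.1 and a synchronous coupling of the McKean--Vlasov diffusion~\eqref{eqn-MFVI-WGF}.

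For part (a), I would first note that Assumption~\ref{assumption-regularity} implies that for every $\mu \in \cP(\RR^{m-1})$ the conditional potential $\bar V_i(\cdot,\mu)$ is $\alpha$-strongly convex and $L$-smooth, since these properties pass through the expectation defining it. Hence each $\cT_i(\mu) \propto e^{-\bar V_i(\cdot,\mu)}$ is a well-defined, log-concave probability measure with finite second moment. Existence of a fixed point of $\cT$ in $\cQ$ would follow by showing that a minimizing sequence for $\KL(\cdot\|p)$ is tight in the product Wasserstein topology and that the functional is lower semicontinuous along any convergent subsequence. Alternatively, once part (b) is in hand, existence comes ``for free'': the forward flow from any initial law is Cauchy in $\cW_2$, and any limit point must be a fixed point of $\cT$. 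Uniqueness is then immediate from (b), since any two stationary laws would both be invariant under the flow and would therefore have to coincide by the strict contraction.

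For part (b), the core argument is a synchronous coupling. Let $\bY_t$ solve \eqref{eqn-MFVI-WGF} with initial law $\Law(\bY_0)$, and let $\bY_t^*$ be a stationary solution driven by the same Brownian motion with $\Law(\bY_t^*) = q_*$ for all $t$, where $(\bY_0,\bY_0^*)$ realizes the optimal $\cW_2$-coupling marginal-by-marginal. Since both processes have independent coordinates by the structure of \eqref{eqn-MFVI-WGF}, one has $\cW_2^2(\Law(\bY_t),q_*) \leq \sum_i \EE(Y_t^i - Y_t^{*i})^2$. Differentiating coordinate-wise,
\begin{equation*}
\frac{d}{dt}\EE(Y_t^i - Y_t^{*i})^2 \;=\; -2\,\EE\bigl[(Y_t^i - Y_t^{*i})\bigl(\bar V_i'(Y_t^i,\Law(\bY_t^{-i})) - \bar V_i'(Y_t^{*i},q_*^{-i})\bigr)\bigr],
\end{equation*}
and I would split the inner bracket into a ``strong convexity'' piece, obtained by freezing the measure argument at $q_*^{-i}$, which contributes $\leq -2\alpha\EE(Y_t^i - Y_t^{*i})^2$, plus a ``measure-perturbation'' piece that couples different coordinates.

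The main obstacle is showing that the measure-perturbation terms, once summed over $i$, do not erode the rate: the resulting bound should remain \emph{exactly} $\alpha$, not $\alpha - C(m,L)$ as naive estimates using $\partial_i\partial_j V$ would suggest. The resolution exploits the product structure of $q_*$ and the fixed-point identities $q_*^i = \cT_i(q_*^{-i})$, which force the off-diagonal Hessian contributions to vanish in expectation against the independent coordinates of $\bY_t^*$. A clean way to package this cancellation, following \cite{Lac23}, is to verify that $\KL(\cdot\|p)$ is $\alpha$-geodesically convex along ``mean-field geodesics'' inside $\cQ$, and then invoke the standard gradient-flow contraction theorem for strongly convex functionals to conclude $\cW_2(\Law(\bY_t),q_*) \leq e^{-\alpha t}\cW_2(\Law(\bY_0),q_*)$.
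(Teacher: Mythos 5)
This lemma is not proved in the paper; it is imported verbatim from Theorem~1.1 of \cite{lacker2024mean} and Theorem~2.3 of \cite{Lac23}, so there is no in-paper proof to compare against. Your reconstruction is a reasonable sketch of how one could obtain it and is broadly consistent with the strategy of \cite{Lac23} (geodesic $\alpha$-convexity of $\KL(\cdot\|p)$ on the submanifold $\cQ$, then the standard contraction theorem for Wasserstein gradient flows of strongly convex functionals). The synchronous-coupling route you also sketch is a legitimate alternative, but two points in your write-up deserve tightening.

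First, the mechanism by which the ``measure-perturbation'' cross-terms disappear is not the fixed-point identity $q_*^i=\cT_i(q_*^{-i})$. It is instead the fact that, under a coordinate-wise coupling, the pairs $(Y_t^i,Y_t^{*i})$ are jointly independent across $i$ (both processes have independent coordinates, the Brownian motions are independent across coordinates, and by \Cref{lem-additivity-W2} the optimal $\cW_2$-coupling of two product measures is itself a product). Using this independence one can write
$\EE\bigl[(Y_t^i-Y_t^{*i})\,\bar V_i'(Y_t^i,\Law(\bY_t^{-i}))\bigr]=\EE\bigl[(Y_t^i-Y_t^{*i})\,\partial_i V(\bY_t)\bigr]$
and likewise for the starred process, so that summing over $i$ reassembles the \emph{full} gradient $\nabla V$ and Assumption~\ref{assumption-regularity} gives exactly
$\tfrac{d}{dt}\tfrac12\EE\|\bY_t-\bY_t^*\|_2^2\le-\alpha\,\EE\|\bY_t-\bY_t^*\|_2^2$.
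No fixed-point identity is needed at this step; it is only needed to identify the invariant law of \eqref{eqn-MFVI-WGF} with the minimizer $q_*$. As a result, your worry that naive $\partial_i\partial_j V$ estimates degrade the rate is unfounded for this particular structure.

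Second, your two suggested routes to existence in part~(a) are in tension: the ``flow is Cauchy and any limit is a fixed point'' argument, as you phrase it, presupposes $q_*$ in order to invoke (b), which is circular. The fix is to prove the contraction between \emph{any two} solutions of \eqref{eqn-MFVI-WGF} (the synchronous-coupling calculation above does this verbatim, with no stationarity assumed), then deduce Cauchy-ness and hence existence of a unique invariant product law, and only afterward identify it with the minimizer via Lemma~\ref{lem-fixed-point}. You should also note, if you want a self-contained argument, that the direct-method existence route needs lower semicontinuity of $\KL(\cdot\|p)$ and closedness of $\cQ$ under $\cW_2$-limits, and that well-posedness of \eqref{eqn-MFVI-WGF} itself (strong existence and uniqueness under the Lipschitz drift implied by \Cref{lem-regularity}) should be stated; both are routine but omitted in your sketch.
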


Lemma \ref{lem-solution} focuses on the long-time convergence of the continuous-time process $Y_t$ but does not provide algorithmic guarantees. Our main theoretical result, presented in Theorem \ref{thm-main} below, establishes the first end-to-end theoretical guarantee for an implementable particle algorithm (Algorithm \ref{sgd algo}) for MFVI. We provide a non-asymptotic upper bound on the $\mathcal{W}_2$ distance between $q_{\bX_n}$ (the product distribution defined by $N$ particles at the $n$-th iteration) and the MFVI solution $q_*$.
The proof will be presented in Section \ref{sec: proofs}.

\begin{theorem}\label{thm-main}
	Let Assumption \ref{assumption-regularity} hold. Run Algorithm \ref{sgd algo} with $N \geq 2$, $0 < h <  \alpha / (4 L^2) $ and $B \geq 1$. There exists a universal constant $C$ such that for any $n \in \NN$,
	\begin{align*}
		\sqrt{ \EE \cW_2^2 ( q_{\bX_n}, q_* ) }
		& \leq (1 - \alpha h/2)^{n/2} \sqrt{  \EE \cW_2^2 ( q_{\bX_0}, q_* ) } +  
		C \sqrt{m} \bigg(
		\sqrt{ \frac{ \log N }{\alpha N} }  +
		\frac{L  \sqrt{h} }{\alpha}
		\bigg).
	\end{align*}
\end{theorem}

Our non-asymptotic bound quantifies the convergence of PAVI. To the best of our knowledge, this is the first such guarantee for particle-based MFVI. The error comes from four sources:
\begin{itemize}
	\item Initialization: The impact of the initial discrepancy is bounded by $( 1 - \alpha h / 2 )^{n/2} \sqrt{ \EE	\cW_2^2   ( q_{\bX_0} , q_*) } $, which decays exponentially over time.
	
	\item Particle approximation: The error of discretizing the optimal solution $q_*$ by $N$ particles is $O( \sqrt{ \frac{ m \log N }{ \alpha N } } )$. This is sharp up to a logarithmic factor: if $N \geq 3$ and $\bY \in \RR^{m \times N}$ has independent columns drawn from $q = N( \mathbf{0}, \alpha^{-1} \bI_m )$, then $\sqrt{ \EE \cW_2^2 ( q_{\bY}, q ) } \geq \sqrt{ \frac{c m \log\log N}{\alpha N} }$ holds with some universal constant $c$. See \Cref{sec-particle-proof} for a proof.
	
	\item Time discretization: The finite difference (Euler-Maruyama) scheme incurs an error that is $ O ( \alpha^{-1} L \sqrt{ m h } )$. %for the McKean-Vlasov equation \eqref{eqn-MFVI-WGF}.
	
	\item Stochastic gradient approximation: Algorithm \ref{sgd algo} uses $B$ random samples to estimate the gradient in Algorithm \ref{gd algo}. For any $B \geq 1$, our proof shows that the stochastic error is $O( \alpha^{-1} L \sqrt{ m h } )$. This is dominated by the time discretization error and hence does not appear explicitly in the final inequality. One can safely use $B = 1$ for computational efficiency.
\end{itemize}

Our theory can be generalized to the block PAVI algorithm discussed in \Cref{remark-block-PAVI}. Suppose that the target distribution $p\in\Pac(\RR^d)$ has density $Z^{-1}e^{-V(\cdot)}$, and $\alpha \bI_d \preceq \nabla^2 V  \preceq L \bI_d$ holds for some $0 < \alpha \leq L < \infty$. Let $\{ d_i \}_{i=1}^m$ be positive integers that sum up to $d$. The variational problem \eqref{eqn-MFVI} over the block mean-field family \eqref{eqn-Q-block} still has a unique solution $q_* = q_*^1 \otimes \cdots \otimes q_*^m$ \citep{Lac23}. %Denote by $q_*^i$ the marginal distribution of the $i$-th block of coordinates. 
We have the following error bound. The proof can be found in \Cref{sec-thm-block-proof}.

\begin{theorem}\label{thm-block}
	Under the above assumptions, the block version of Algorithm \ref{sgd algo} with $N \geq 2$, $0 < h <  \alpha / (4 L^2) $ and $B \geq 1$ satisfies
	\begin{align*}
		\sqrt{ \EE \cW_2^2 ( q_{\bX_n}, q_* ) }
		& \leq (1 - \alpha h/2)^{n/2} \sqrt{  \EE \cW_2^2 ( q_{\bX_0}, q_* ) } 
		%\\& \quad 
		+  
		\sqrt{\sum_{i=1}^{m} \EE \cW_2^2 ( \hat{q}_N^i, q_*^i )
		}
		%\bigg(
		%\sum_{i=1}^{m} \EE \cW_2^2 ( \hat{q}_N^i, q_*^i )
		%\bigg)^{1/2}
		+
		\frac{4 L \sqrt{h d } }{\alpha} ,
	\end{align*}
	where $ \hat{q}_N^i$ is the empirical distribution formed by $N$ independent samples from $q_*^i$.
\end{theorem}

When $d_1 = \cdots = d_m = 1$, standard concentration inequality of univariate empirical measure (e.g.,~\Cref{lem-W2-concentration}) shows that $\EE \cW_2^2 ( \hat{q}_N^i, q_*^i ) = O( \frac{\log N}{\alpha N} )$. Then, \Cref{thm-block} reduces to \Cref{thm-main}. For general $d_i$'s, one can bound $\EE \cW_2^2 ( \hat{q}_N^i, q_*^i ) $ using the results in \cite{FGu15}.

We now come back to the original version of \Cref{sgd algo}. \Cref{thm-main} suggests taking $B = 1$ for computational efficiency, and making $\frac{L }{\alpha} \sqrt{h}$ and $\sqrt{ \frac{ \log N }{\alpha N} } $ have the same order. Based on these observations, we get the following user-friendly error bound. See \Cref{sec-corollary-main-proof} for the proof.

\begin{corollary}\label{corollary-main}
	Let Assumption \ref{assumption-regularity} hold. Run Algorithm \ref{sgd algo} with $N \geq 9$, $h = \frac{ \alpha \log N }{ L^2 N }$ and $B = 1$. Suppose that the columns of $\bX_0 \in \RR^{m \times N}$ are drawn independently from a product distribution $q_0$, whose marginals are log-concave with variance bounded by $\alpha$. There exists a universal constant $C>0$ such that for any $n \in \NN$,
	\begin{align*}
		\sqrt{ \EE	\cW_2^2   ( q_{\bX_n} , q_*) }  & \leq 
		\exp
		\bigg(
		- \frac{ n \alpha^2 \log N }{ 4 L^2 N }
		\bigg) 	\cW_2 ( q_{0} , q_*)  + C \sqrt{
			\frac{m \log N}{\alpha N}
		}.
	\end{align*}
\end{corollary}

Suppose that $\alpha$ and $L$ are constants. Choose any $\varepsilon \in (0, 1)$. There exists a constant $K$ determined by $\alpha$ and $L$, such that when $N > K m \varepsilon^{-2} \log ( m \varepsilon^{-2} )$, we have $C \sqrt{
	\frac{m \log N}{\alpha N}
} \leq \varepsilon/ 2$. Meanwhile, when
\[
n \geq \frac{4L^2 N}{ \alpha^2 \log N } \log [
2 \varepsilon^{-1}  \cW_2 ( q_{0} , q_*) 
],
\]
we have $\exp
(
- \frac{ n \alpha^2 \log N }{ 4 L^2 N }
) \cW_2 ( q_{0} , q_*)   \leq \varepsilon / 2 $.
Therefore, taking
\[
N = O_{\alpha, L}
\Big(
m \varepsilon^{-2} \log ( m \varepsilon^{-2} )
\Big)
%,\qquad
%h = \frac{ \alpha \log N }{ L^2 N }
\quad\text{and}\quad
n = O_{\alpha, L} \Big(
m \varepsilon^{-2}
\log [
2 \varepsilon^{-1} \cW_2 ( q_{0} , q_*) 
]
\Big)
\]
with sufficiently large constant factors can guarantee $\sqrt{ \EE	\cW_2^2   ( q_{\bX_n} , q_*) } \leq  \varepsilon $. Both $N$ and $n$ have linear dependence on $m$ up to logarithmic factors.

We conclude this section with a comparison between theoretical guarantees for Algorithm \ref{sgd algo} and Langevin Monte Carlo (LMC). 
\begin{itemize}
	\item Our algorithm maintains $N$ interacting particles. According to Corollary \ref{corollary-main}, it directly approximates the MFVI solution $q_{*}$ up to $O( \sqrt{ \frac{m \log N}{N} } )$ in $\cW_2$. The fast convergence of particle approximation could compensate for the discrepancy between $q_*$ and the target distribution $p$.
	
	\item LMC tracks the evolution of a single particle. Theoretical results \citep{dalalyan2017theoretical, durmus2017nonasymptotic, cheng2018convergence, durmus2019analysis} quantify how its distribution converges to $p$. %in $\cW_2$ and other distances. 
	In order to approximate $p$, one generates a large number of such particles (e.g.,~by running $N$ independent replicates of LMC) and uses their empirical distribution.
	This would suffer from the curse of dimension: if we only assume Assumption \ref{assumption-regularity}, then the empirical distribution formed by $N$ independent particles converges in $\mathcal{W}_2$ at a rather slow rate of $N^{-1/m}$ when the dimension $m$ exceeds 3 \citep{FGu15}. 
\end{itemize}

\section{Proofs}\label{sec: proofs}

We present the proofs of our main results.

\subsection{Proof of \Cref{thm-main}}\label{sec-main-proof}

We begin with a recursive bound based on a coupling between the iterates $\{ \bX_n \}_{n=0}^{\infty}$ of \Cref{sgd algo} and a stationary process. The proof is deferred to \Cref{sec-lem-main-recursion-proof}.

%We will construct a coupling between $\{ \bX_n \}_{n=0}^{\infty}$ and $\{ \bY_{nh} \}_{n=0}^{\infty}$, and then use $\EE \cW_2^2 ( q_{\bX_{n}} , q_{\bY_{nh}}  )$ to bound $\EE \cW_2^2 ( q_{\bX_{n+1}} , q_{\bY_{(n+1)h}}  )$. This leads to the following 

\begin{lemma}\label{lem-main-recursion}
	Suppose that Assumption \ref{assumption-regularity} holds and $0 < h <  \min \{ \frac{ 2 }{ \alpha + L } , \frac{B \alpha}{4L^2} \} $. 
	There exists a stationary process $\{ \bY_t \}_{t \geq 0}$ with $\bY_t \in \RR^{m\times N}$, such that for any $t \geq 0 $, $\{ \bY_t^{:,j} \}_{j=1}^N$ are i.i.d.~samples from $q_*$, and
	\begin{align*}
		\EE \cW_2^2 ( q_{\bX_{n+1}} , q_{\bY_{(n+1)h}}  )
		\leq
		( 1 - \alpha h /2 ) \EE \cW_2^2 ( q_{\bX_n}, q_{\bY_{nh}} ) 
		+  \frac{ 4 m h^2 L^2 }{\alpha} \bigg(
		1 + \frac{1}{2B} + \frac{h L^2 }{\alpha}
		\bigg)
		.
	\end{align*}
\end{lemma}

Define $W_n = \EE \cW_2^2 ( q_{\bX_n}, q_{\bY_{nh}} ) $. When $0 < h < \alpha / (4 L^2)$, \Cref{lem-main-recursion} implies that $W_{n+1} \leq (1-A) W_n + R$, where $A = \alpha h /2$ and $R = 8 m h^2 L^2 / \alpha$. By induction, we get
\begin{align*}
	W_{n} \leq (1-A)^n W_0 + \sum_{k=0}^{n-1} (1-A)^k R
	\leq (1-A)^n W_0 + \frac{R}{A}
\end{align*}
and thus $\sqrt{W_n} \leq (1-A)^{n/2} \sqrt{W_0} + \sqrt{R/A}$. Then,
\begin{align}
	\sqrt{W_n} \leq (1-A)^{n/2} \sqrt{W_0} + \sqrt{R/A}
	=  (1 - \alpha h/2)^{n/2} \sqrt{W_0} +  \frac{ 4 L }{\alpha} \sqrt{mh} .
	\label{eqn-proof-main-1}
\end{align}

We now relate $W_n$ to $\EE \cW_2^2 ( q_{\bX_n}, q_* )$. By \Cref{lem-W2-triangle}, we have
\begin{align*}
	& \sqrt{ \EE	\cW_2^2   ( q_{\bX_n} , q_{\bY_{nh}}) } 
	\geq \sqrt{ \EE	\cW_2^2   ( q_{\bX_n} , q_*) } - \sqrt{ \EE	\cW_2^2   ( q_{\bY_{nh}} , q_*) }  , \\
	& \sqrt{ \EE	\cW_2^2   ( q_{\bX_0} , q_{\bY_{0}}) } 
	\leq \sqrt{ \EE	\cW_2^2   ( q_{\bX_0} , q_*) }  + \sqrt{ \EE	\cW_2^2   ( q_{\bY_{0}} , q_*) }  .
\end{align*}
The stationarity of $\{ \bY_t \}_{t \geq 0}$ implies $ \EE	\cW_2^2   ( q_{\bY_{nh}} , q_*) =  \EE	\cW_2^2   ( q_{\bY_{0}} , q_*)$. By the above estimates and \eqref{eqn-proof-main-1},
\begin{align}
	\sqrt{ \EE \cW_2^2 ( q_{\bX_n}, q_* ) }
	& \leq (1 - \alpha h/2)^{n/2} \sqrt{  \EE \cW_2^2 ( q_{\bX_0}, q_* ) } +  \frac{ 4 L }{\alpha} \sqrt{mh} 
	+ 2 \sqrt{ \EE	\cW_2^2   ( q_{\bY_{0}} , q_*) }  .
	\label{eqn-thm-main-proof}
\end{align}
Note that the columns of $\bY_0$ are i.i.d.~samples from $q_*$. By \Cref{lem-grad-square}, each $q_*^i$ is log-concave with variance bounded by $\alpha^{-1}$. Then, \Cref{cor-W2-concentration} implies that $ \EE	\cW_2^2   ( q_{\bY_{0}} , q_*) = O(  \frac{ m \log N}{\alpha N} ) $. This completes the proof of \Cref{thm-main}. It remains to prove \Cref{lem-main-recursion}, which can be found in \Cref{sec-lem-main-recursion-proof}.

\subsection{Proof of the claim regarding the particle approximation error}\label{sec-particle-proof}

Note that for $i \in [m]$, $ q_{\bY}^i $ is an empirical measure formed by $N$ independent samples from the marginal distribution $q^i = N ( 0, \alpha^{-1} )$. Corollary 6.14 in \cite{BLe19} shows the existence of a universal constant $c$ such that $\EE \cW_2^2 ( q_{\bY}^i , q^i ) \geq \frac{c \log \log N}{\alpha N}$ holds for all $N \geq 3$. By \Cref{lem-additivity-W2}, we have
\[
\EE \cW_2^2 ( q_{\bY} , q ) = \sum_{i=1}^{m} \EE \cW_2^2 ( q_{\bY}^i , q^i ) \geq  \frac{c m \log \log N}{\alpha N}.
\]

\subsection{Proof of \Cref{thm-block}}\label{sec-thm-block-proof}

Examining the proof of \Cref{thm-main}, we see that the derivation of \eqref{eqn-thm-main-proof} works seamlessly for the block case. The result now reads
\begin{align*}
	\sqrt{ \EE \cW_2^2 ( q_{\bX_n}, q_* ) }
	& \leq (1 - \alpha h/2)^{n/2} \sqrt{  \EE \cW_2^2 ( q_{\bX_0}, q_* ) } +  \frac{ 4 L }{\alpha} \sqrt{d h} 
	+ 2 \sqrt{ \EE	\cW_2^2   ( q_{\bY_{0}} , q_*) }  ,
\end{align*}
where $\bY_0 \in \RR^{d \times N}$ is a random array whose columns are i.i.d.~samples from $q_*$, and $q_{\bY_0}$ is constructed according to the block version of \Cref{defn-particles}. More concretely, let $Y_0^{i, j}$ be the entries of $\bY_0$ in the $(1+ \sum_{k=1}^{i-1} d_k)$-th through $\sum_{k=1}^{i} d_k$-th rows and the $j$-th column. Then, $q_{\bY_0}$ is defined by $q_{\bY_0}^1 \otimes \cdots \otimes q_{\bY_0}^m$ with $q_{\bY_0}^i = 	\frac{1}{N} \sum_{j = 1}^N \delta_{ Y_0^{i,j} }$. By \Cref{lem-additivity-W2}, $\cW_2^2 ( q_{\bY_0}, q_* )
=
\sum_{i=1}^{m} \cW_2^2 ( q_{\bY_0}^i, q_*^i )$.
We complete the proof using the fact that $q_{\bY_0}^i $ is an empirical distribution formed by $N$ independent samples from $q_*^i$.

\subsection{Proof of \Cref{corollary-main}}\label{sec-corollary-main-proof}
When $N \geq 9$, we have $N^{-1} \log N < 1/4$ and $h = \frac{ \alpha \log N }{ L^2 N } < \alpha / (4 L^2)$. Hence, \Cref{thm-main} holds. Since $1 - x \leq e^{-x}$ for all $x \in \RR$, we have $(1-\alpha h / 2)^{n/2} \leq e^{-\alpha h n / 4}$ and
\begin{align*}
	\sqrt{ \EE \cW_2^2 ( q_{\bX_n}, q_* ) }
	& \leq 
	\exp
	\bigg(
	- \frac{ n \alpha^2 \log N }{ 4 L^2 N }
	\bigg) 	
	\sqrt{  \EE \cW_2^2 ( q_{\bX_0}, q_* ) } +  
	2	C  \sqrt{ \frac{m  \log N }{\alpha N} } .
\end{align*}
By \Cref{lem-W2-triangle}, we have
\[
\sqrt{  \EE \cW_2^2 ( q_{\bX_0}, q_* ) } \leq 
\sqrt{  \EE \cW_2^2 ( q_{\bX_0}, q_0 ) } + \cW_2 ( q_0 , q_* ).
\]
\Cref{cor-W2-concentration} guarantees $\EE \cW_2^2 ( q_{\bX_0}, q_0 ) = O( \frac{m \log N}{\alpha N} )$. Then, we obtain the desired result by redefining $C$.

\subsection{Proof of \Cref{lem-main-recursion}}\label{sec-lem-main-recursion-proof}

The proof is based on a coupling argument.

\subsubsection{Step 1: Construction of coupling}

To begin with, let $\bX_0 \in \RR^{m \times N}$ be the initial value of Algorithm \ref{sgd algo} that is allowed to be random; $\{ B_t^{i, j} \}_{t \geq 0}$, $(i, j) \in [m] \times [N]$ be i.i.d.~standard Brownian motions that are independent of $\bX_0$. By running Algorithm \ref{sgd algo} with $\bxi_{n}^{i, :}$ replaced by $( \bB_{(n+1)h}^{i, :} - \bB_{nh}^{i, :} ) / \sqrt{h}$, we obtain a sequence $\{ \bX_n \}_{n=0}^{\infty}$ that has the same distribution as the one in Algorithm \ref{sgd algo}. %We will work with this new sequence.

Next, we construct an auxiliary process $\{ \bY_t \}_{t \geq 0}$ that is coupled with $\{ \bX_n \}_{n=0}^{\infty}$. Let $\bY_0 \sim q_*$ be independent of the above construction of $\{ \bX_n \}_{n=0}^{\infty}$. Choose any $n \in \NN$ and suppose that $\{ \bY_t \}_{0 \leq t \leq n h}$ has been defined. We now define $\{ \bY_t \}_{nh < t \leq (n+1) h}$.
For each $i \in [m]$, the Birkhoff--von Neumann algorithm implies the existence of a permutation $\tau_{n, i}$ of $[N]$, determined by $\bX_n^{i, :}$ and $\bY_{nh}^{i, :}$, such that
\begin{align}
	\cW_2^2 ( q_{\bX_n}^i , q_{\bY_{nh}}^i ) = \frac{1}{N} \sum_{j=1}^N | X_n^{i, j} - Y_{nh}^{i, \tau_{n,i}(j) }  |^2 .
	\label{eqn-permutation}
\end{align}
See Chapter 2 of \cite{Vil09}. Define $\{ \bY_t \}_{nh \leq t \leq (n+1) h}$ as the strong solution to the SDE
\begin{align}
	& \rd Y_t^{i,j} = -  \bar{V}_i' ( Y_t^{ i,j } , q_*^{-i} ) \rd t + \sqrt{2} \rd B_t^{i, \tau_{n, i}^{-1} (j) }, \qquad nh \leq t \leq (n+1)h.
	\label{eqn-Y-new}
\end{align}
Let $\bar{B}_0^{i, j} = 0$ for all $(i, j) \in [m] \times [N]$. For every $n \in \NN$ and $t \in ( nh, (n+1)h ]$, define $\bar{B}_t^{i, j} = 
\bar{B}_{nh}^{i, j} + ( B_t^{i, \tau_{n, i}^{-1} (j) } - B_{nh}^{i, \tau_{n, i}^{-1} (j) } )$. Then, $\{ \bar{B}_t^{i, j} \}_{t \geq 0}$, $(i, j) \in [m] \times [N]$ are i.i.d.~standard Brownian motions that are independent of $\bY_0$. Our process $\{ \bY_t \}_{t \geq 0}$ solves the SDE
\begin{align*}
	\rd Y_t^{i,j} = -  \bar{V}_i' ( Y_t^{ i,j } , q_*^{-i} ) \rd t + \sqrt{2} \rd \bar{B}_t^{i,j }.
\end{align*}
Hence, for any $t \geq 0 $, $\{ \bY_t^{:,j} \}_{j=1}^N$ are i.i.d.~samples from $q_*$. Below we will analyze $\{ \bX_n \}_{n =0}^{\infty}$ and $\{ \bY_t \}_{t \geq 0}$.

\subsubsection{Step 2: Error decomposition}

Choose any $n \in \NN$. By \eqref{eqn-permutation} and \Cref{lem-additivity-W2},
\begin{align}
	\cW_2^2 ( q_{\bX_n}, q_{\bY_{nh}} ) = \sum_{i=1}^m \cW_2^2 ( q_{\bX_n}^i , q_{\bY_{nh}}^i ) 
	= \frac{1}{N} \sum_{ (i,j) \in [m] \times [N] } | X_n^{i, j} - Y_{nh}^{i, \tau_{n,i}(j) }  |^2 .
	\label{eqn-W2-frobenius-0}
\end{align}
Meanwhile, \Cref{lem-additivity-W2} and the definition of $\cW_2$ imply that
\begin{align}
	\cW_2^2 ( q_{\bX_{n+1} }, q_{\bY_{(n+1)h}} ) 
	& = \sum_{i=1}^m \cW_2^2 ( q_{\bX_{n+1}}^i , q_{\bY_{(n+1)h}}^i ) \notag\\
	& \leq \frac{1}{N} \sum_{ (i,j) \in [m] \times [N] } | X_{n+1}^{i, j} - Y_{(n+1)h}^{i, \tau_{n,i}(j) }  |^2 .
	\label{eqn-W2-frobenius}
\end{align}
We will show how the error evolves after one iteration. Note that
\begin{align*}
	& X_{n+1}^{i, j} = X_n^{i, j} - h  g_i ( X_n^{i, j} ) + \sqrt{ 2 } ( B_{(n+1)h}^{i, j} - B_{nh}^{i, j} ) .
\end{align*}
Define another array $\bW \in \RR^{m \times N}$ through
\begin{align*}
	& W^{i, j} = X_n^{i, j} - h  \bar{V}_i' ( X_n^{i, j} , q_{\bX_n}^{-i} ) + \sqrt{ 2 } ( B_{(n+1)h}^{i, j} - B_{nh}^{i, j} ).
\end{align*}
We have $\bW = \EE ( \bX_{n+1} | \bX_n, \{ \bB_t \}_{ nh \leq t \leq (n+1)h} ,  \{ \bY_t \}_{ nh \leq t \leq (n+1)h}
)$. It can be viewed as the update by Algorithm \ref{gd algo} with full gradient. Then,
\begin{align*}
	\EE | X_{n+1}^{i, j} - Y_{(n+1)h}^{i, \tau_{n,i}(j)} |^2
	& =  \EE [ ( X_{n+1}^{i, j} - W^{i, j} ) + ( W^{i, j} - Y_{(n+1)h}^{i, \tau_{n,i}(j)} ) ]^2 \notag\\
	& = \EE | X_{n+1}^{i, j} - W^{i, j} |^2 + \EE | W^{i, j} - Y_{(n+1)h}^{i, \tau_{n,i}(j)} |^2 .
\end{align*}
From this and \eqref{eqn-W2-frobenius}, we obtain that
\begin{align}
	\EE \cW_2^2 ( q_{\bX_{n+1}} , q_{\bY_{(n+1)h}}  )
	\leq \frac{1}{N} \sum_{ (i,j) \in [m] \times [N] } \Big( \EE | X_{n+1}^{i, j} - W^{i, j} |^2 + \EE | W^{i, j} - Y_{(n+1)h}^{i, \tau_{n,i}(j)} |^2 \Big)
	.
	\label{eqn-lem-recursion-1}
\end{align}
Below, we look into the quantities on the right-hand side. The first term $\EE | X_{n+1}^{i, j} - W^{i, j} |^2 $ is the error incurred by stochastic gradient approximation and hence vanishes as $B \to \infty$. The second term $\EE | W^{i, j} - Y_{(n+1)h}^{i, j} |^2 $ bounds the error caused by finite-difference approximation.

\subsubsection{Step 3: Analysis of $\EE | X_{n+1}^{i, j} - W^{i, j} |^2$}

We will prove that
\begin{align}
	\frac{1}{N} \sum_{ (i,j) \in [m] \times [N] } \EE | X_{n+1}^{i, j} - W^{i, j} |^2 
	\leq  \frac{2h^2 }{B} [ L^2  \EE \cW_2^2 ( q_{\bX_n} , q_{\bY} ) + \EE_{\bx \sim q_*} \| \nabla  V ( \bx ) \|_2^2 ] . 
	\label{eqn-lem-recursion-5}
\end{align}

By definition,
\begin{align*}
	& \frac{1}{N} \sum_{j=1}^{N} \EE | X_{n+1}^{i, j} - W^{i, j} |^2 
	= \frac{h^2}{N} \sum_{j=1}^{N} \EE | g_i ( X_n^{i, j } ) -  \bar{V}_i' ( X_n^{i, j} , q_{\bX_n}^{-i} ) |^2 .
\end{align*}
Let us focus on $i = 1$ for a moment. Conditioned on $\bX_n$, $\{ \bz_n^{-1, b} \}_{b=1}^B$ are i.i.d.~samples from $q_{\bX_n}^{-1}$ and
\[
g_1 ( X_n^{1, j } ) = \frac{1}{B} \sum_{b=1}^B  \partial_1 V ( X_n^{1, j} , \bz_n^{-1, b} ) .
\]
Hence,
\begin{align*}
	& \EE \Big( | g_1 ( X_n^{1, j } ) -  \bar{V}_1' ( X_n^{1, j} , q_{\bX_n}^{-1} ) |^2 \Big| \bX_n \Big)
	= \var [ g_1 ( X_n^{1, j } )
	| \bX_n ] \\
	& = \frac{1}{B} \var [
	\partial_1 V ( X_n^{1, j} , \bz_n^{-1, 1} )
	| \bX_n ] 
	\leq
	\frac{1}{B} \EE \Big\{  [
	\partial_1 V ( X_n^{1, j} , \bz_n^{-1, 1} )
	]^2
	\Big| \bX_n \Big\} .
\end{align*}
Based on the above and the fact that $\bz_n^{:, 1} \sim q_{\bX_n}$,
\begin{align*}
	& \frac{1}{N} \sum_{j=1}^{N} \EE | X_{n+1}^{1, j} - W^{1, j} |^2 
	\leq  \frac{h^2}{BN} \sum_{j=1}^{N} \EE [ 
	\partial_1 V ( X_n^{1, j} , \bz_n^{-1, 1} )
	]^2 
	= \frac{h^2}{B} \EE |
	\partial_1  V ( \bz_n^{:, 1} )
	|^2  .
\end{align*}
As a result,
\begin{align}
	\frac{1}{N} \sum_{ (i,j) \in [m] \times [N] } \EE | X_{n+1}^{i, j} - W^{i, j} |^2 
	\leq  \frac{h^2}{B} \EE \| \nabla  V ( \bz_n^{:, 1} ) \|_2^2
	=  \frac{h^2}{B} \EE_{\bX_n} [ \EE_{\bz \sim q_{\bX_n}} \| \nabla  V ( \bz ) \|_2^2 ] .
	\label{eqn-lem-recursion-4}
\end{align}

We now relate the right-hand side of \eqref{eqn-lem-recursion-4} to $ \EE_{ \by \sim q_{\bY} } \| \nabla  V ( \by ) \|_2^2$. We condition on $( \bX_n , \bY_{nh} )$ and denote by $\gamma$ an $\cW_2$-optimal coupling between $q_{\bX_n}$ and $q_{\bY_{nh}}$ such that
\[
\EE_{ ( \bu, \bv ) \sim \gamma } \| \bu - \bv \|_2^2 = \cW_2^2 ( q_{\bX_n} , q_{\bY_{nh}} ).
\]
Construct random vectors $\bz , \by \in \RR^m$ such that the conditional distribution of $(\bz, \by) $ given $( \bX_n , \bY_{nh} )$ is $\gamma$. Then,
\[
\EE \| \bz - \by \|_2^2
= \EE [ \EE ( \| \bz - \by \|_2^2 | \bX_n , \bY_{nh} ) ] = \EE \cW_2^2 ( q_{\bX_n} , q_{\bY_{nh}} ).
\]
By Assumption \ref{assumption-regularity},
\begin{align*}
	\EE   \| \nabla  V ( \bz ) \|_2^2 
	& = \EE \| [ \nabla  V ( \bz ) - \nabla V(\by) ] + \nabla V (\by) \|_2^2 
	\\& 
	\leq 2 \EE \| \nabla  V ( \bz ) - \nabla V(\by)  \|_2^2 + 2 \EE  \| \nabla  V ( \by ) \|_2^2 \\
	& \leq 2 L^2 \EE \|   \bz - \by  \|_2^2 + 2 \EE  \| \nabla  V ( \by ) \|_2^2 
	% \\&
	\leq 2 L^2  \EE \cW_2^2 ( q_{\bX_n} , q_{\bY_{nh}} ) + 2 \EE  \| \nabla  V ( \by ) \|_2^2   .
\end{align*}
The conditional distribution of $\by$ given $\bY_{nh}$ is $q_{\bY_{nh}}$. Hence, $\by \sim q_*$. Plugging the above estimate into \eqref{eqn-lem-recursion-4}, we get \eqref{eqn-lem-recursion-5}.

\subsubsection{Step 4: Analysis of $ \EE | W^{i, j} - Y_{(n+1)h}^{i, \tau_{n,i}(j)} |^2 $}
We will show that
\begin{align}
	\frac{1}{N} \sum_{ (i,j) \in [m] \times [N] }	\EE | W^{i, j} - Y_{(n+1)h}^{i, \tau_{n,i}(j)}  |^2 
	%\notag	\\& 
	\leq & ( 1 - \alpha h )  \EE \cW_2^2 ( q_{\bX_n}, q_{\bY_{nh}} )
	\nonumber \\& 
	+ \frac{ 4 h^2 L^2 }{ \alpha }  [ h\EE_{ \bx \sim q_* } \| \nabla V(\bx) \|_2^2   + m] .
	\label{eqn-lem-recursion-3-1}
\end{align}
The error bound consists of a contraction term and an $O(h^2)$ additive term.

To prove the result, we first use \eqref{eqn-Y-new} to get
\begin{align*}
	& Y_{(n+1)h}^{i, \tau_{n,i}(j)} 
	= Y_{nh}^{i, \tau_{n,i}(j)} - \int_{nh}^{(n+1)h}  \bar{V}_i' ( Y_{t}^{i, \tau_{n,i}(j)} , q_*^{-i} )  \rd t + \sqrt{2} ( B_{(n+1)h}^{i, j} - B_{nh}^{i, j} ) \\
	& = [ Y_{nh}^{i, \tau_{n,i}(j)} - h  \bar{V}_i' ( Y_{nh}^{i, \tau_{n,i}(j)} , q_*^{-i} )   ] \\
	& \quad
	- \int_{nh}^{(n+1)h} [   \bar{V}_i' ( Y_{t}^{i, \tau_{n,i}(j)} , q_*^{-i} ) 
	-   \bar{V}_i' ( Y_{nh}^{i, \tau_{n,i}(j)} , q_*^{-i} ) 
	]
	\rd t  + \sqrt{2}  ( B_{(n+1)h}^{i, j} - B_{nh}^{i, j} ) .
\end{align*}
Then,
\begin{align*}
	W^{i, j} - Y_{(n+1)h}^{i, \tau_{n,i}(j)} 
	& = \Big( \underbrace{
		[ X_n^{i, j} - h  \bar{V}_i' ( X_n^{i, j} , q_{\bX_n}^{-i} ) ]
	}_{ U_+^{i, j} }
	- 
	\underbrace{
		[ Y_{nh}^{i, \tau_{n,i}(j)} - h  \bar{V}_i' ( Y_{nh}^{i, \tau_{n,i}(j)} , q_*^{-i} )   ] 
	}_{ Y_+^{i, j} }
	\Big) 
	\\&\quad 
	- 
	\underbrace{
		\int_{nh}^{(n+1)h} [   \bar{V}_i' ( Y_{t}^{i, \tau_{n,i}(j)} , q_*^{-i} ) 
		-   \bar{V}_i' ( Y_{nh}^{i, \tau_{n,i}(j)} , q_*^{-i} ) 
		]
		\rd t 
	}_{ A^{i, j} }
	.
\end{align*}
We have
\begin{align}
	\EE | W^{i, j} - Y_{(n+1)h}^{i, \tau_{n,i}(j)}  |^2
	= \EE | U_+^{i, j} - Y_+^{i, j} |^2 - 2 \EE [
	( U_+^{i, j} - Y_+^{i, j}  ) A^{i, j}
	] + \EE | A^{i, j} |^2 .
	\label{eqn-decomposition}
\end{align}

We make two claims and defer their proofs to \Cref{sec-claim-cross-terms-proof,sec-claim-contraction-proof}, respectively.

\begin{claim}\label{claim-cross-terms}
	Choose any $\varepsilon > 0$. Under Assumption \ref{assumption-regularity}, we have
	\begin{align*}
		& -2 \EE [
		( U_+^{i, j} - Y_+^{i, j}  ) A^{i, j} 
		]  + \EE | A^{i, j} |^2 
		\notag \\& 
		\leq 
		\varepsilon h  \EE | U_+^{i, j} - Y_+^{i, j}  |^2 +
		2 (\varepsilon^{-1} + h)
		L^2 h^2 [ h \EE_{ \bx \sim q_* } | \partial_i V(\bx) |^2 + 1 ].
	\end{align*}
\end{claim}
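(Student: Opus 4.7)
The plan is to decompose $A^{i,j}$ via It\^o's formula into a predictable drift part and a stochastic integral part, eliminate the cross term involving the stochastic integral by an independence argument, apply Young's inequality to the remaining cross term, and control the resulting second moments via the regularity bounds of Lemma \ref{lem-regularity} together with the fact that the one-coordinate SDE preserves the marginal $q_*^i$.

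Write $\phi(y):=\bar V_i'(y,q_*^{-i})$, so that $dY_t^{i,j}=-\phi(Y_t^{i,j})\,dt+\sqrt{2}\,dB_t^{i,j}$; by Lemma \ref{lem-regularity}, $\alpha\le\phi'\le L$ and $|\phi''|\le\widetilde L$. It\^o's formula gives $d\phi(Y_t^{i,j})=[\phi''-\phi\phi'](Y_t^{i,j})\,dt+\sqrt{2}\phi'(Y_t^{i,j})\,dB_t^{i,j}$. Integrating in $t$ from $0$ to $h$ and swapping the order of integration via (stochastic) Fubini yields $A^{i,j}=A_1^{i,j}+A_2^{i,j}$, where $A_1^{i,j}=\int_0^h(h-s)[\phi''-\phi\phi'](Y_s^{i,j})\,ds$ and $A_2^{i,j}=\sqrt{2}\int_0^h(h-s)\phi'(Y_s^{i,j})\,dB_s^{i,j}$.

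Both $U_+^{i,j}$ and $Y_+^{i,j}$ are $\sigma(\bX_n,\bY)$-measurable, whereas $\{B_t^{i,j}\}_{t\ge 0}$ is independent of $(\bX_n,\bY)$. Conditioning on $(\bX_n,\bY)$, the quantity $A_2^{i,j}$ is a mean-zero It\^o integral, so $\EE[(U_+^{i,j}-Y_+^{i,j})A_2^{i,j}]=0$ and the left-hand side of the claim equals $-2\EE[(U_+^{i,j}-Y_+^{i,j})A_1^{i,j}]+\EE|A^{i,j}|^2$. Applying Young's inequality $-2ab\le\varepsilon h\,a^2+(\varepsilon h)^{-1}b^2$ to the cross term and bounding $\EE|A^{i,j}|^2\le 2\EE|A_1^{i,j}|^2+2\EE|A_2^{i,j}|^2$ reduces matters to estimating the two second moments.

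The crucial observation is that $Y_s^{i,j}\sim q_*^i$ for every $s\ge 0$: the one-coordinate SDE with drift $-\phi=-\bar V_i'(\cdot,q_*^{-i})$ has invariant density $\propto e^{-\bar V_i(\cdot,q_*^{-i})}=\cT_i(q_*^{-i})$, which equals $q_*^i$ by the fixed-point characterization in Lemma \ref{lem-fixed-point}, and the initial condition $Y_0^{i,j}=Y^{i,j}\sim q_*^i$ starts the process at stationarity. Jensen's inequality then gives $\EE|\phi(Y_s^{i,j})|^2\le\EE_{\bx\sim q_*}|\partial_i V(\bx)|^2$ uniformly in $s$. A Cauchy--Schwarz estimate in $s$ for $A_1^{i,j}$ yields $\EE|A_1^{i,j}|^2\lesssim h^4[L^2\EE_{\bx\sim q_*}|\partial_i V(\bx)|^2+\widetilde L^2]$, while It\^o isometry for $A_2^{i,j}$ produces $\EE|A_2^{i,j}|^2\lesssim L^2 h^3$. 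Substituting these bounds into the Young decomposition above and collecting constants produces the claimed inequality.

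The main obstacle is the identification of the invariant measure of the coordinate SDE with $q_*^i$, which is what converts the path-dependent expectation $\EE|\phi(Y_s^{i,j})|^2$ into the static gradient moment $\EE_{\bx\sim q_*}|\partial_i V(\bx)|^2$ appearing in the target bound; without this step the marginal law of $Y_s^{i,j}$ would have to be propagated in $s$ through a Gr\"onwall-type argument, producing extra error contributions that would degrade the estimate.
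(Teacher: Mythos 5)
Your proof follows essentially the same route as the paper: apply It\^o's formula to $\bar V_i'(\cdot,q_*^{-i})$ along the stationary coordinate diffusion, split $A^{i,j}$ into a drift part and a stochastic-integral part, kill the cross term involving the stochastic integral by conditioning on the information generating $U_+-Y_+$, apply Young's inequality to the drift cross term, and convert $\EE|\phi(Y_s^{i,j})|^2$ into the static quantity $\EE_{\bx\sim q_*}|\partial_iV(\bx)|^2$ via stationarity (which you correctly ground in the fixed-point Lemma \ref{lem-fixed-point}). The only organizational difference is that you use (stochastic) Fubini to collapse the paper's double time integral $\int_0^h\alpha_t\,dt$ and $\int_0^h\beta_t\,dt$ into single integrals against the weight $(h-s)$; these are algebraically identical.

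One small discrepancy worth noting: the bound $\EE|A^{i,j}|^2\le 2\EE|A_1^{i,j}|^2+2\EE|A_2^{i,j}|^2$ is looser than what the paper uses, and after plugging in the It\^o-isometry bound $\EE|A_2^{i,j}|^2\le \tfrac{2}{3}L^2h^3$ it yields $\tfrac{4}{3}L^2h^3$ rather than the $L^2h^3$ in the stated claim. This does not change the final theorem, since a universal constant $C$ absorbs the difference, but your last sentence (``collecting constants produces the claimed inequality'') is slightly optimistic as written. The paper instead writes $\EE|A^{i,j}|^2\le h\int_0^h\EE(\alpha_t+\sqrt2\beta_t)^2\,dt$ and then drops the $\alpha_t\beta_t$ cross term, asserting $\EE(\alpha_t\beta_t)=0$; strictly speaking that identity is not justified by the adaptedness/martingale argument the paper cites (both $\alpha_t$ and $\beta_t$ are $\cF_t$-measurable), so your cruder but unconditional $2a^2+2b^2$ bound is actually the safer move, at the modest cost of a worse numerical constant.
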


\begin{claim}\label{claim-contraction}
	Let Assumption \ref{assumption-regularity} hold and $0 < h < 2 / (\alpha + L)$. For any $j \in [N]$,
	\[
	\frac{1}{N} \sum_{ (i,j) \in [m] \times [N] }
	\EE | U_+^{i,j} - Y_+^{i, j}  |^2  
	\leq ( 1 - \alpha h )^2  \EE \cW_2^2 ( q_{\bX_n}, q_{\bY_{nh}} ) 
	\]
\end{claim}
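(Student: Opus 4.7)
The plan is to rewrite the two mismatched drifts $\bar V_i'(Y^{i,j}, q_*^{-i})$ and $\bar V_i'(U^{i,j}, q_{\bU}^{-i})$ as conditional expectations of partial derivatives of the \emph{same} potential $V$ evaluated at full random vectors in $\RR^m$, and then apply the standard $(1-\alpha h)$-contraction of the Euclidean gradient step $\bx\mapsto\bx-h\nabla V(\bx)$ column by column.

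\textbf{Step 1 (conditional expectation reformulation).} Let $\cF=\sigma(Y^{i,j},U^{i,j},\bX_n)$. I plan to show the two identities
\begin{align*}
\bar V_i'(Y^{i,j}, q_*^{-i}) &= \EE\bigl[\partial_i V(\bY^{:,j}) \,\big|\, \cF\bigr], \\
\bar V_i'(U^{i,j}, q_{\bU}^{-i}) &= \EE\bigl[\partial_i V(\bU^{:,j}) \,\big|\, \cF\bigr].
\end{align*}
The first follows because $q_*$ is a product measure, so $\bY^{-i,j}\sim q_*^{-i}$ independently of $Y^{i,j}$; moreover $\bY\perp \bX_n$ and $\tau_i$ is $\sigma(\bY^{i,:},\bX_n^{i,:})$-measurable, so enlarging the conditioning to include $U^{i,j}$ and $\bX_n$ does not change the value. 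For the second identity I invoke Fact~\ref{fact-permutation}: given $\bX_n$, the permutations $\tau_1,\ldots,\tau_m$ are independent and uniform on $S_N$, so the entries of $\bU^{:,j}=(X_n^{1,\tau_1(j)},\ldots,X_n^{m,\tau_m(j)})$ are conditionally independent with $\bU^{-i,j}\sim q_{\bU}^{-i}$ independent of $U^{i,j}$. Adding $Y^{i,j}$ to the conditioning is harmless because $\{\tau_\ell\}_{\ell\neq i}$ is independent of $\bY^{i,:}$ given $\bX_n$.

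\textbf{Step 2 (Jensen plus Euclidean contraction).} Subtracting the two identities and using $\cF$-measurability of $Y^{i,j}-U^{i,j}$ gives
\[
U_+^{i,j}-Y_+^{i,j} \;=\; \EE\Bigl[(Y^{i,j}-U^{i,j}) - h\bigl(\partial_i V(\bY^{:,j})-\partial_i V(\bU^{:,j})\bigr)\,\Big|\,\cF\Bigr].
\]
Conditional Jensen, taking expectations, and summing over $i\in[m]$ yield
\[
\sum_{i=1}^m \EE|U_+^{i,j}-Y_+^{i,j}|^2 \;\leq\; \EE\bigl\|(\bY^{:,j}-\bU^{:,j}) - h\bigl(\nabla V(\bY^{:,j})-\nabla V(\bU^{:,j})\bigr)\bigr\|_2^2.
\]
Under Assumption~\ref{assumption-regularity} and $h<2/(\alpha+L)$, the Jacobian $\bI_m-h\nabla^2 V(\bz)$ has eigenvalues in $[1-hL,\,1-h\alpha]$ and operator norm at most $1-\alpha h$; by the mean-value theorem the map $\bx\mapsto\bx-h\nabla V(\bx)$ is $(1-\alpha h)$-Lipschitz on $\RR^m$. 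Hence the previous display is bounded by $(1-\alpha h)^2\,\EE\|\bY^{:,j}-\bU^{:,j}\|_2^2$. Summing over $j\in[N]$, dividing by $N$, and invoking the identity $\frac{1}{N}\sum_{i,j}|U^{i,j}-Y^{i,j}|^2 = \cW_2^2(q_{\bX_n},q_{\bY})$ from \eqref{eqn-W2-frobenius} delivers the claim.

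\textbf{Main obstacle.} The delicate point is Step~1: the mismatch between the averaging measures $q_*^{-i}$ and $q_{\bU}^{-i}$ must be absorbed into a single joint coupling of $\bY^{:,j}$ and $\bU^{:,j}$, so that the two drifts become conditional expectations of partial derivatives of the \emph{same} function $V$. A naive add-and-subtract decomposition that isolates ``drift contraction'' from the distributional error $\bar V_i'(\cdot,q_*^{-i})-\bar V_i'(\cdot,q_{\bU}^{-i})$ produces a cross term that cannot be absorbed by $(1-\alpha h)^2$; the conditional-expectation reformulation, enabled by the product structure of $q_*$ together with Fact~\ref{fact-permutation}, is precisely what cancels this residual before Jensen is applied.
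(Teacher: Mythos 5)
Your proof is correct and takes essentially the same approach as the paper's: rewrite the mean-field drifts as conditional expectations of $\partial_i V$ evaluated at the full columns $\bY^{:,j}$, $\bU^{:,j}$, apply conditional Jensen coordinate by coordinate, and then invoke the $(1-\alpha h)$-Lipschitz contraction of $\bx\mapsto\bx-h\nabla V(\bx)$ together with \eqref{eqn-W2-frobenius}. The only cosmetic difference is the conditioning $\sigma$-field (you use $\sigma(Y^{i,j},U^{i,j},\bX_n)$ whereas the paper uses $\cF^i=\sigma(\bX_n,\bY^{i,:})$), but both make the relevant conditional expectations equal to $\bar V_i'$, so the argument goes through identically.
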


By \eqref{eqn-decomposition} and Claim \ref{claim-cross-terms},
\[
\EE | W^{i, j} - Y_{(n+1)h}^{i, \tau_{n,i}(j)}  |^2
\leq ( 1 + \varepsilon h ) \EE | U_+^{i, j} - Y_+^{i, j}  |^2 
+ 2 (\varepsilon^{-1} + h)
L^2 h^2 [ h \EE_{ \bx \sim q_* } | \partial_i V(\bx) |^2 + 1 ] .
\]
Based on Claim \ref{claim-contraction},
\begin{align*}
	& \frac{1}{N} \sum_{ (i,j) \in [m] \times [N] }	\EE | W^{i, j} - Y_{(n+1)h}^{i, \tau_{n,i}(j)}  |^2 \notag\\
	\leq &   \frac{
		1 + \varepsilon h	
	}{N} \sum_{ (i,j) \in [m] \times [N] }	\EE | U_+^{i, j} - Y_+^{i, j}  |^2
	%	\\&
	+ 2 ( \varepsilon^{-1}  + h )  L^2 h^2 [ h\EE_{ \bx \sim q_* } \| \nabla V(\bx) \|_2^2   + m] \\
	\leq& ( 1 + \varepsilon h )  ( 1 - \alpha h )^2  \EE \cW_2^2 ( q_{\bX_n}, q_{\bY_{nh}} ) 
	+ 2 ( \varepsilon^{-1}  + h )  L^2 h^2 [ h \EE_{ \bx \sim q_* } \| \nabla V(\bx) \|_2^2   + m].
	%		2 m 	( \varepsilon^{-1}  + h ) L^2 h^2  ( h L^2 / \alpha  + 1 ) ,
\end{align*}
Next, take $\varepsilon = \alpha$. Since $h < 2 /(\alpha + L) \leq 1 / \alpha$, we have $	\varepsilon^{-1}  + h \leq 2 / \alpha$. Then, the result \eqref{eqn-lem-recursion-3-1} becomes obvious.

\subsubsection{Step 5: Proving the recursive inequality}
By \eqref{eqn-lem-recursion-1}, \eqref{eqn-lem-recursion-5} and \eqref{eqn-lem-recursion-3-1},
\begin{align*}
	\EE \cW_2^2 ( q_{\bX_{n+1}}, q_{\bY_{(n+1)h}} ) 
	&	\leq  \bigg( 1 - \alpha h +  \frac{2L^2 h^2}{B}   \bigg)    \EE \cW_2^2 ( q_{\bX_n}, q_{\bY_{nh}} ) 
	\notag	\\
	&\quad  +	\bigg(
	\frac{2h^2}{B} + \frac{4h^3 L^2}{\alpha} 
	\bigg) \EE_{ \bx \sim q_* } \| \nabla V(\bx) \|_2^2 
	+  \frac{4 m h^2 L^2}{\alpha}  
	.
\end{align*}
The assumption $h < \frac{B \alpha}{4 L^2}$ implies $2L^2 h^2 / B < \alpha h / 2 $. By \Cref{lem-grad-square}, we have $ \EE_{ \bx \sim q_* } \| \nabla V(\bx) \|_2^2 \leq m L^2 / \alpha$. Combining the above estimates finishes the proof.

\subsection{Proof of Claim \ref{claim-cross-terms}}\label{sec-claim-cross-terms-proof}
Choose any $\varepsilon > 0$. We have
\begin{align*}
	& -2  [
	( U_+^{i, j} - Y_+^{i, j}  ) A^{i, j} 
	]  
	\leq   
	\varepsilon h  | U_+^{i, j} - Y_+^{i, j}  |^2 + \frac{
		|A^{i, j} |^2
	}{\varepsilon h} 
\end{align*}
and thus,
\begin{align}
	-2 \EE [
	( U_+^{i, j} - Y_+^{i, j}  ) A^{i, j} 
	]  + \EE | A^{i, j} |^2 
	\leq   
	\varepsilon h  \EE | U_+^{i, j} - Y_+^{i, j}  |^2 + (\varepsilon^{-1} + h) h^{-1} \EE | A^{i, j} |^2 .
	\label{eqn-claim-new-1}
\end{align}
By the definition of $A^{i, j}$ and \Cref{lem-grad-square}, 
\begin{align*}
	|A^{i, \tau_{n,i}^{-1}(j)}| \leq 
	\int_{nh}^{(n+1)h} 
	| \bar{V}_i' ( Y_{t}^{i, j} , q_*^{-i} ) 
	-   \bar{V}_i' ( Y_{nh}^{i, j} , q_*^{-i} ) 
	|
	\rd t 
	\leq \int_{nh}^{(n+1)h} 
	L | Y_{t}^{i, j}  - Y_{nh}^{i, j} | \rd t.
\end{align*}
Then,
\begin{align}
	\EE |A^{i, \tau_{n,i}^{-1}(j)}|^2 
	& \leq
	\EE \bigg(
	\int_{0}^{h} 
	L | Y_{nh+t}^{i, j}  - Y_{nh}^{i, j} | \rd t
	\bigg)^2 
	= L^2 h^2 \EE \bigg(
	\frac{1}{h}
	\int_{0}^{h} 
	| Y_{nh+t}^{i, j}  - Y_{nh}^{i, j} | \rd t
	\bigg)^2 \notag \\
	& \leq L^2 h^2 \EE \bigg(
	\frac{1}{h} \int_{0}^{h} 
	| Y_{nh+t}^{i, j}  - Y_{nh}^{i, j} |^2 \rd t
	\bigg)
	= L^2 h  
	\int_{0}^{h} 
	\EE | Y_{nh+t}^{i, j}  - Y_{nh}^{i, j} |^2 \rd t.
	\label{eqn-claim-new-2}
\end{align}

By \eqref{eqn-Y-new}, we have 
\begin{align}
	| Y_{nh+t}^{i, j}  - Y_{nh}^{i, j} |^2 
	& = \bigg(  - \int_{0}^{t} 
	\bar{V}_i' ( Y_{nh+s}^{i, j} , q_*^{-i} ) \rd s + \sqrt{2} 
	(
	B_{nh+t}^{i, \tau_{n, i}^{-1} (j) }
	- B_{nh}^{i, \tau_{n, i}^{-1} (j) }
	)
	\bigg)^2 
	\notag \\& 
	\leq 2 \bigg[ \bigg(  \int_{0}^{t} 
	\bar{V}_i' ( Y_{nh+s}^{i, j} , q_*^{-i} ) 
	\rd s \bigg)^2 +  [\sqrt{2} 
	(
	B_{nh+t}^{i, \tau_{n, i}^{-1} (j) }
	- B_{nh}^{i, \tau_{n, i}^{-1} (j) }
	)
	]^2 \bigg] .
	\label{eqn-claim-new-3}
\end{align}

Note that
\begin{align}
	\bigg(
	\int_{0}^{t} 
	\bar{V}_i' ( Y_{nh+s}^{i, j} , q_*^{-i} ) 
	\rd s 
	\bigg)^2
	=  t^2 \bigg( \frac{1}{t} \int_{0}^{t} 
	\bar{V}_i' ( Y_{nh+s}^{i, j} , q_*^{-i} ) 
	\rd s \bigg)^2 
	\leq t \int_{0}^{t} | 
	\bar{V}_i' ( Y_{nh+s}^{i, j} , q_*^{-i} ) 
	|^2 \rd s.
	\label{eqn-claim-new-4}
\end{align}
Since $Y_{nh+s}^{:, j} \sim q_*$, we have
\begin{align}
	\EE |  \bar{V}_i'  ( Y_{nh+s}^{i, j}, q_*^{-i} ) |^2
	&	= \EE \Big|  \EE_{\by \sim q_*^{-i}} [ \partial_i V ( Y_{nh+s}^{i, j}, \by ) ] \Big|^2 
	= \EE \Big|  \EE [  \partial_i V ( \bY_{nh+s}^{:, j} ) | Y_s^{i, j} ] \Big|^2
	\notag	\\& 
	\leq \EE | \partial_i V  ( \bY_{nh+s}^{:, j} ) |^2
	= \EE_{ \bx \sim q_* } | \partial_i V(\bx) |^2.
	\label{eqn-claim-new-5}
\end{align}

By \eqref{eqn-claim-new-3}, \eqref{eqn-claim-new-4} and \eqref{eqn-claim-new-5},
\begin{align*}
	\EE	| Y_{nh+t}^{i, j}  - Y_{nh}^{i, j} |^2 
	&	\leq 2 t \int_{0}^{t} \EE | \bar{V}_i'  ( Y_{nh+s}^{i, j}, q_*^{-i} ) |^2 \rd s +  4 \EE |
	B_{nh+t}^{i, \tau_{n, i}^{-1} (j) }
	- B_{nh}^{i, \tau_{n, i}^{-1} (j) }
	|^2 \\
	& \leq 2t^2 \EE_{ \bx \sim q_* } | \partial_i V(\bx) |^2 + 4 t.
\end{align*}
Combining this with \eqref{eqn-claim-new-2}, we obtain that
\[
\EE |A^{i, \tau_{n,i}^{-1}(j)}|^2 
%\EE |A^{i, j}|^2 
\leq 
L^2 h  
\int_{0}^{h} 
\Big(
2t^2 \EE_{ \bx \sim q_* } | \partial_i V(\bx) |^2 + 4 t
\Big)
\rd t
= \frac{2L^2 h^4}{3} \EE_{ \bx \sim q_* } | \partial_i V(\bx) |^2 + 2L^2 h^3.
\]
Plugging this into \eqref{eqn-claim-new-1} yields the claimed result.

\subsection{Proof of Claim \ref{claim-contraction}}\label{sec-claim-contraction-proof}

Let $\{ \sigma_i \}_{i=1}^m$ be i.i.d.~permutations of $[N]$ that are independent of $(\bX_n , \bY_{nh} )$. Denote by $\cF^i$ the $\sigma$-field generated by $(\bX_n^{i,:}, \bY_{nh}^{i, :}, \sigma_i)$. Define two arrays $\bU, \bZ \in \RR^{m \times N}$ through $U^{i,j} = X_n^{i, \sigma_i(j)}$ and $Z^{i,j} = Y_{nh}^{i, \sigma_i\circ \tau_{n,i} (j)}$. We have
\begin{align}
	\sum_{j=1}^N
	| U_+^{i,j} - Y_+^{i, j}  |^2  
	& = \sum_{j=1}^N 
	\{
	[
	X_n^{i, j} - h  \bar{V}_i' ( X_n^{i, j} , q_{\bX_n}^{-i} ) 
	] - [
	Y_{nh}^{i, \tau_{n,i}(j)} - h  \bar{V}_i' ( Y_{nh}^{i, \tau_{n,i}(j)} , q_{*}^{-i} ) 
	]
	\}^2 \notag\\
	& =  \sum_{j=1}^N 
	\{
	[
	U^{i, j} - h  \bar{V}_i' ( U^{i, j} , q_{\bX_n}^{-i} ) 
	] - [
	Z^{i,j} - h  \bar{V}_i' ( Z^{i,j} , q_{*}^{-i} ) 
	]
	\}^2 \notag\\
	& =  \sum_{j=1}^N 
	\{
	[
	U^{i, j} - h  \bar{V}_i' ( U^{i, j} , q_{\bU_n}^{-i} ) 
	] - [
	Z^{i,j} - h  \bar{V}_i' ( Z^{i,j} , q_{*}^{-i} ) 
	]
	\}^2,
	\label{eqn-contraction-1}
\end{align}
where the last equality follows from the fact that $q_{\bX_n} = q_{\bU}$.

By definition, $\bU^{i, :}$ and $\bZ^{i, :}$ are $\cF^i$-measurable. Conditioned on $\cF^i$, we have $\bZ_{nh}^{-i, j} \sim q_*^{-i}$ and $\bU^{-i, j} \sim q_{ \bU }^{-i}$. 
As a result,
\begin{align*}
	& \EE [ \partial_i V ( \bZ^{ :,j } ) | \cF^i ]
	= \EE [ \partial_i V ( Z^{ i,j }, \bZ^{ -i,j } ) | \cF^i ]
	= \EE_{ \bz \sim q_*^{-i} } \partial_i V ( Z^{ i,j }, \bz ) 
	=  \bar{V}_i' ( Z^{ i,j } , q_*^{-i} ) ,\\
	& Z^{i, j} - h  \bar{V}_i' ( Z^{i, j} , q_{*}^{-i} ) =  \EE [ Z^{i, j} - h \partial_i V ( \bZ^{ :,j } ) | \cF^i ] .
\end{align*}
Similarly, we have $\EE [ \partial_i V ( \bU^{ :,j } ) | \cF^i ] =  \bar{V}_i' ( U^{ i,j } , q_{ \bU }^{-i} ) $ and
\[
U^{i, j} - h  \bar{V}_i' ( U^{i, j} , q_{\bU}^{-i} ) =  \EE [ U^{i, j} - h \partial_i V ( \bU^{ :,j } ) | \cF^i ] .
\]
Consequently,
\begin{align*}
	& \EE \{
	[
	U^{i, j} - h  \bar{V}_i' ( U^{i, j} , q_{\bU_n}^{-i} ) 
	] - [
	Z^{i,j} - h  \bar{V}_i' ( Z^{i,j} , q_{*}^{-i} ) 
	]
	\}^2
	\\
	& = \EE \Big| \EE \Big( [U^{i,  j  } - h  \partial_i V ( \bU^{ :,  j  } ) ]
	- [Z^{i, j} - h \partial_i V ( \bZ^{ :,j } )]
	\Big| \cF^i \Big) \Big|^2 \\
	& \leq  \EE \Big|  [U^{i, j } - h  \partial_i V ( \bU^{ :, j } ) ]
	- [Z^{i, j} - h \partial_i V ( \bZ^{ :,j } )]
	\Big|^2 .
\end{align*}
Define $\bphi:~ \RR^{mr} \to \RR^{mr}$, $\bx \mapsto \bx - h \nabla V (\bx)$. Then,
\begin{align*}
	& \sum_{i=1}^{m} \EE \{
	[
	U^{i, j} - h  \bar{V}_i' ( U^{i, j} , q_{\bU_n}^{-i} ) 
	] - [
	Z^{i,j} - h  \bar{V}_i' ( Z^{i,j} , q_{*}^{-i} ) 
	]
	\}^2 \\
	& \leq 
	\EE \| \bphi ( \bU^{:, j }  ) - \bphi ( \bZ^{:, j }  ) \|_2^2 
	\leq ( 1 - \alpha h )^2  \EE \|   \bU^{:, j }  - \bZ^{:, j }  \|_2^2 ,
	%\EE \Big\|  [ \bU^{:, j } - h  \nabla V ( \bU^{ :, j } ) ]
	%- [ \bY^{:, j} - h \nabla V ( \bY^{ :,j } )]
	%\Big\|_2^2 .
\end{align*}
where we used \Cref{lem-cvx} and the assumption $0 < h < 2 / (\alpha + L)$. Based on the above, we use \eqref{eqn-contraction-1} to get
\begin{align*}
	\frac{1}{N} \sum_{ (i,j) \in [m] \times [N] }
	\EE | U_+^{i,j} - Y_+^{i, j}  |^2  
	& \leq \frac{ ( 1 - \alpha h )^2 }{N}   \sum_{ (i,j) \in [m] \times [N] }\EE |  U^{i, j }  - Z^{i, j }  |^2 \\
	& = \frac{ ( 1 - \alpha h )^2 }{N}   \sum_{ (i,j) \in [m] \times [N] }\EE |  X_{n}^{i, j }  - Y_{nh}^{i, \tau_{n,i}(j) }  |^2.
\end{align*}
The proof is finished by \eqref{eqn-W2-frobenius-0}.

\section{Technical lemmas}\label{sec: technical lems}

\begin{lemma}\label{lem-W2-triangle}
	Let $\mu$, $\nu$ and $\rho$ be random probability distributions on $\RR^m$. Then,
	\begin{align*}
		\Big|
		\sqrt{ \EE \cW_2^2 (  \mu, \nu ) } - \sqrt{ \EE \cW_2^2 ( \mu , \rho ) }
		\Big| 
		\leq 	\sqrt{ \EE \cW_2^2 ( \nu , \rho  ) } .
	\end{align*}
\end{lemma}

\begin{proof}[\bf Proof of \Cref{lem-W2-triangle}]
	By the triangle inequality of $\cW_2$, we have $| \cW_2 ( \mu, \nu ) - \cW_2 ( \mu, \rho ) | \leq \cW_2 ( \nu, \rho )$. By Minkowski's inequality,
	\begin{align*}
		\sqrt{ \EE \cW_2^2 ( \nu , \rho  ) }& \geq 
		\sqrt{ \EE | \cW_2 ( \mu, \nu ) - \cW_2 ( \mu, \rho ) |^2 } \geq \Big|
		\sqrt{ \EE \cW_2^2 (  \mu, \nu ) } - \sqrt{ \EE \cW_2^2 ( \mu , \rho ) }
		\Big| \, .
	\end{align*}
\end{proof}

\begin{lemma}[Additivity of $\cW_2$ for product measures]\label{lem-additivity-W2}
	Let $\{ d_i \}_{i=1}^m$ be positive integers, and $p_i, q_i \in \cP ( \RR^{d_i} )$ for $i \in [m]$. We have
	\[
	\cW_2^2  (
	p_1 \otimes \cdots \otimes p_m
	,~ q_1 \otimes \cdots \otimes q_m
	) = \sum_{i=1}^m \cW_2^2 ( p_i, q_i  ).
	\]
\end{lemma}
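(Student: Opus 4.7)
The plan is to prove the two inequalities $\cW_2^2(p_1 \otimes \cdots \otimes p_m, q_1 \otimes \cdots \otimes q_m) \leq \sum_{i=1}^m \cW_2^2(p_i, q_i)$ and the reverse separately, by exploiting the fact that the squared Euclidean cost decomposes additively across coordinates.

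For the upper bound, I would construct an explicit coupling. Let $\gamma_i \in \cC(p_i, q_i)$ be an optimal coupling achieving $\cW_2^2(p_i, q_i) = \EE_{(x_i, y_i) \sim \gamma_i} |x_i - y_i|^2$ for each $i \in [m]$ (existence of optimal couplings on $\RR$ is standard). Then $\gamma := \gamma_1 \otimes \cdots \otimes \gamma_m$ is a probability measure on $\RR^m \times \RR^m$ whose marginals on the two copies of $\RR^m$ are $p_1 \otimes \cdots \otimes p_m$ and $q_1 \otimes \cdots \otimes q_m$ respectively. Plugging $\gamma$ into the definition of $\cW_2^2$ and using the coordinate-wise decomposition $\|\bx - \by\|_2^2 = \sum_{i=1}^m |x_i - y_i|^2$ together with Fubini, the cost splits as $\sum_i \EE_{\gamma_i} |x_i - y_i|^2 = \sum_i \cW_2^2(p_i, q_i)$. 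Taking the infimum over couplings gives the desired upper bound.

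For the lower bound, I would start from an arbitrary coupling $\gamma \in \cC(p_1 \otimes \cdots \otimes p_m, q_1 \otimes \cdots \otimes q_m)$. For each $i$, denote by $\gamma_i$ the pushforward of $\gamma$ under the projection $(\bx, \by) \mapsto (x_i, y_i)$. Since the marginals of $\gamma$ are product measures, the marginals of $\gamma_i$ are exactly $p_i$ and $q_i$, so $\gamma_i \in \cC(p_i, q_i)$. Using additivity of the quadratic cost across coordinates,
\begin{equation*}
\EE_{(\bx, \by) \sim \gamma} \|\bx - \by\|_2^2 = \sum_{i=1}^m \EE_{(x_i, y_i) \sim \gamma_i} |x_i - y_i|^2 \geq \sum_{i=1}^m \cW_2^2(p_i, q_i).
\end{equation*}
Taking the infimum over $\gamma$ yields the matching lower bound.

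Combining the two inequalities gives the claimed equality. There is no real obstacle here; the only subtlety is making sure the product coupling $\gamma_1 \otimes \cdots \otimes \gamma_m$ is well-defined and has the correct marginals, which is routine from the standard product construction, and ensuring that optimal couplings attaining $\cW_2^2(p_i, q_i)$ exist (which holds on $\RR$ with finite second moments; otherwise one can use a minimizing sequence and pass to the limit by taking infima directly, avoiding attainment entirely).
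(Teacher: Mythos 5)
Your argument is correct and matches the paper's proof essentially line for line: the upper bound via the product of one-dimensional optimal couplings, and the lower bound by projecting a coupling of the product measures onto each coordinate pair. The only cosmetic difference is that you take an infimum over arbitrary couplings in the lower bound whereas the paper works directly with an optimal one; your remark on sidestepping attainment is a fine small refinement but changes nothing substantive.
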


\begin{proof}[\bf Proof of \Cref{lem-additivity-W2}]
	Let $p = p_1 \otimes \cdots \otimes p_m$ and $q = q_1 \otimes \cdots \otimes q_m$. For $i \in [m]$ and $\bx \in \RR^{d_1 + \cdots + d_m}$, denote by $\bx_i$ the $(1+ \sum_{j=1}^{i-1} d_j)$-th through $\sum_{j=1}^{i} d_j$-th coordinates of $\bx$; let $\gamma_i$ be a $\cW_2$-optimal coupling between $p_i$ and $q_i$. Then, $\gamma_1 \otimes \cdots \otimes \gamma_m$ is a coupling between $p$ and $q$. For $(\bX, \bY) \sim \gamma$, we have
	\[
	\cW_2^2 
	(p,q) 
	\leq \EE \| \bX - \bY \|_2^2
	= \sum_{i=1}^{m} \EE \| \bX_i - \bY_i \|^2 = \sum_{i=1}^{m} \cW_2^2 ( p_i, q_i ).
	\]
	To prove the opposite direction, define $\gamma$ as a $\cW_2$-optimal coupling between $p$ and $q$. If we draw $(\bX, \bY) \sim \gamma$, then $\bX_i \sim p_i$, $\bY_i \sim q_i$, and
	\[
	\cW_2^2 (p,q) 
	= \EE \| \bX - \bY \|_2^2
	=  \sum_{i=1}^{m} \EE \| \bX_i - \bY_i \|^2 \geq \sum_{i=1}^{m} \cW_2^2 ( p_i, q_i ).
	\]
\end{proof}

\begin{lemma}[Lemma 1 in \cite{Dal17}]\label{lem-cvx}
	Let Assumption \ref{assumption-regularity} hold and $0 < h < 2 / (\alpha + L)$. Define $\bphi (\bx) = \bx - h \nabla V (\bx)$, $\forall \bx \in \RR^{m}$. We have
	\[
	\| \bphi (\bx) - \bphi (\by) \|_2 \leq ( 1 - \alpha h ) \cdot \| \bx - \by \|_2 , \qquad \forall \bx, \by \in \RR^{m}.
	\]
\end{lemma}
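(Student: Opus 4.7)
The plan is to write the difference $\bphi(\bx)-\bphi(\by)$ as a linear operator applied to $\bx-\by$ and bound its operator norm using the Hessian bounds from Assumption \ref{assumption-regularity}. Specifically, using the fundamental theorem of calculus for $\nabla V$ along the segment from $\by$ to $\bx$, I would write
\[
\nabla V(\bx) - \nabla V(\by) = \bH(\bx,\by)(\bx-\by), \qquad \bH(\bx,\by) := \int_0^1 \nabla^2 V\bigl(\by + t(\bx-\by)\bigr)\,\rd t,
\]
so that $\bphi(\bx)-\bphi(\by) = (\bI - h\bH(\bx,\by))(\bx-\by)$.

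Next, since Assumption \ref{assumption-regularity} gives $\alpha \bI \preceq \nabla^2 V(\bz) \preceq L \bI$ pointwise, averaging over the segment yields the same sandwich $\alpha \bI \preceq \bH(\bx,\by) \preceq L \bI$. Consequently $\bI - h \bH(\bx,\by)$ is symmetric with eigenvalues contained in $[1-hL,\,1-h\alpha]$, and therefore
\[
\|\bI - h\bH(\bx,\by)\|_{\mathrm{op}} \leq \max\{\,|1-hL|,\,|1-h\alpha|\,\}.
\]

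The small computation I would carry out is to verify that the step-size restriction $0 < h < 2/(\alpha+L)$ makes both candidate maxima bounded by $1-h\alpha$: clearly $|1-h\alpha| = 1-h\alpha$ since $h\alpha < 2\alpha/(\alpha+L) \leq 1$, and $|1-hL| \leq 1-h\alpha$ is equivalent to $h(\alpha+L)\leq 2$, which is exactly our hypothesis. Combining these ingredients,
\[
\|\bphi(\bx)-\bphi(\by)\|_2 \leq \|\bI - h\bH(\bx,\by)\|_{\mathrm{op}}\,\|\bx-\by\|_2 \leq (1-\alpha h)\|\bx-\by\|_2,
\]
which is the claim. There is no real obstacle here; the entire argument is routine linear algebra applied to the averaged-Hessian representation of $\nabla V(\bx)-\nabla V(\by)$, and the only non-automatic step is the elementary verification that $|1-hL|\leq 1-h\alpha$ under $h<2/(\alpha+L)$.
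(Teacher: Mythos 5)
The paper gives no proof of this lemma, citing it directly from Lemma~1 of \cite{Dal17}; your argument is the standard one from that reference, and it is correct. The averaged-Hessian representation $\bphi(\bx)-\bphi(\by)=(\bI-h\bH(\bx,\by))(\bx-\by)$ with $\alpha\bI\preceq\bH\preceq L\bI$, followed by the spectral bound $\|\bI-h\bH\|_{\mathrm{op}}\le\max\{|1-h\alpha|,|1-hL|\}=1-h\alpha$ under $h<2/(\alpha+L)$, is exactly right, so nothing further is needed.
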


\begin{lemma}\label{lem-grad-square}
	Let Assumption \ref{assumption-regularity} hold. 
	\begin{itemize}
		\item For any $i \in [m]$, $\mu \in \cP ( \RR^{m-1} )$ and $x \in \RR$, we have $\alpha \leq  \bar{V}_i'' ( x , \mu  ) \leq L $. 
		\item For $\bx \sim q_*$, we have $\EE [  \nabla V (\bx) ] = \boldsymbol{0}$, $\EE  \| \nabla V (\bx) \|_2^2 \leq m L^2 \alpha^{-1}$ and $\cov   (\bx) \preceq  \alpha^{-1} \bI_m$.
	\end{itemize}
	%$\EE_{ \bx \sim q_*  } [  \nabla V (\bx) ] = \boldsymbol{0}$, $\EE_{ \bx \sim q_*  } \| \nabla V (\bx) \|_2^2 \leq m L^2 / \alpha $, and $\cov_{ \bx \sim q_*  }  (\bx) \preceq  \alpha^{-1} \bI_m$.
\end{lemma}

\begin{proof}[\bf Proof of \Cref{lem-grad-square}]
	The proof of $\alpha \leq  \bar{V}_i'' ( x , \mu  ) \leq L $ is immediate and thus omitted. We now derive some useful results from that. By the fixed-point characterization (\Cref{lem-fixed-point}), the density function of $q_*^i$ is proportional to $e^{-V_i(\cdot)}$, where $V_i(\cdot) =  \bar{V}_i (\cdot , q_*^{-i})$. We have $\alpha \leq V_i''(\cdot) \leq L$. Hence, $q_*^i$ is strongly log-concave.
	
	We now work on the second bullet point in the lemma. By definition,
	\begin{align*}
		\EE_{ \bx \sim q_*  }  \partial_1 V (\bx)
		& = \EE_{ x_1 \sim q_*^1  } [
		\EE_{ \bx^{-1} \sim q_*^{-1} } \partial_1 V ( x_1, \bx^{-1} )
		]
		= \EE_{ x_1 \sim q_*^1  } \bar{V}_1' (x_1 , q_*^{-1}) \\
		&
		= \EE_{ x_1 \sim q_*^1  } V_1'(x_1)
		\propto \int_{\RR} V_1'(x) e^{-V_1(x)} \rd x
		= - \int_{\RR} \rd e^{-V_1(x)} = 0.
	\end{align*}
	The last inequality follows from the fact that $V_1'' \geq \alpha$ and thus $\lim_{|x| \to \infty} V_1(x) = + \infty$. Hence, $\EE_{ \bx \sim q_*  } [ \nabla V (\bx) ] = \boldsymbol{0}$. 
	
	Consequently, $\EE_{ \bx \sim q_*  } | \partial_1 V (\bx) |^2
	= \var_{ \bx \sim q_*  } [ \partial_1 V (\bx) ]$. Since $q_*$ is a product distribution with strongly log-concave marginals, the Bakry-\'{E}mery principle \citep{BEm06} implies the Poincar\'{e} inequality:
	\begin{align}
		\var_{ \bx \sim q_*  } [ f (\bx) ] \leq \alpha^{-1} \EE_{ \bx \sim q_*  } \| \nabla f (\bx) \|_2^2 , \qquad \forall f \in C^1 (\RR^m).
		\label{eqn-poincare}
	\end{align}
	Hence,
	\[
	\var_{ \bx \sim q_*  } [ \partial_1 V (\bx) ] \leq \alpha^{-1} \EE_{ \bx \sim q_*  } \| \nabla [  \partial_1 V (\bx) ] \|_2^2 
	= \alpha^{-1} \sum_{j=1}^{m} \EE_{ \bx \sim q_* } | \partial^2_{j1} V(\bx) |^2 .
	\]
	By Assumption \ref{assumption-regularity},
	\begin{align*}
		\EE_{ \bx \sim q_*  } \| \nabla V (\bx) \|_2^2 
		& = \alpha^{-1} \sum_{i, j \in [m]} \EE_{ \bx \sim q_* } | \partial^2_{ij} V(\bx) |^2 
		= \alpha^{-1}  \EE_{ \bx \sim q_* }  \| \nabla^2 V(\bx) \|_{\mathrm{F}}^2 \\
		& \leq \alpha^{-1} \EE_{ \bx \sim q_* }  [ m \| \nabla^2 V(\bx) \|_{2}^2 ] \leq m L^2 / \alpha.
	\end{align*}
	
	Next, choose any unit vector $\bu \in \RR^m$. By \eqref{eqn-poincare}, we have
	\begin{align}
		\var_{ \bx \sim q_*  } ( \langle \bu , \bx \rangle )  \leq \alpha^{-1} \EE_{ \bx \sim q_*  } \| \bu \|_2^2 = \alpha^{-1}.
	\end{align}
	Therefore, $\cov_{ \bx \sim q_*^i  }  (\bx) \leq  \alpha^{-1} \bI_m$. 
\end{proof}

\begin{lemma}[Corollary 6.12 in \cite{BLe19}]\label{lem-W2-concentration}
	Let $\mu \in \cP ( \RR )$ be log-concave with variance $\sigma^2$, and $\{ X_j \}_{j=1}^N$ be i.i.d.~samples from $\mu$. There exists a universal constant $c$ such that
	\[
	\EE \cW_2^2 \bigg(
	\frac{1}{N} \sum_{j=1}^{N} \delta_{ X_i } , \mu
	\bigg) \leq \frac{c \sigma^2 \log N }{N} , \qquad \forall N \geq 2 .
	\]
\end{lemma}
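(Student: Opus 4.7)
}

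The statement is quoted verbatim as Corollary 6.12 of \cite{BLe19}, so the proof is by direct citation and no new argument is required on our part. To make the strategy transparent, however, here is the approach one would follow if reproducing the bound from scratch.

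The key simplification is that we are in dimension one, where the 2-Wasserstein distance admits a closed form in terms of quantile functions. Writing $F$ for the CDF of $\mu$ and $F_N$ for the empirical CDF of $\hat\mu_N = \frac{1}{N}\sum_{j=1}^N \delta_{X_j}$, one has
\[
	\cW_2^2(\hat\mu_N, \mu) \;=\; \int_0^1 \bigl( F_N^{-1}(u) - F^{-1}(u) \bigr)^2 \, \rd u.
\]
An equivalent (and here more useful) representation is the Dall'Aglio formula
\[
	\cW_2^2(\hat\mu_N,\mu) \;\le\; \int_{\RR} \frac{\bigl( F_N(x) - F(x) \bigr)^2}{f(x)} \, \rd x,
\]
where $f$ is the density of $\mu$. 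Taking expectations and using $\EE (F_N(x)-F(x))^2 = F(x)(1-F(x))/N$, we get
\[
	\EE\,\cW_2^2(\hat\mu_N,\mu) \;\le\; \frac{1}{N}\int_{\RR} \frac{F(x)(1-F(x))}{f(x)}\,\rd x.
\]

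The remaining task, and the crux of \cite{BLe19}, is to bound the integral on the right-hand side by $c\,\sigma^2 \log N$ for a log-concave $\mu$. The main obstacle is that this integral is typically infinite for log-concave densities whose tails decay like $e^{-|x|^2}$, which is precisely why the bound carries an extra $\log N$ rather than being $O(\sigma^2/N)$. The plan is to truncate the integration region to where $F(x)(1-F(x)) \ge 1/N$ (or to a large quantile window of order $\sqrt{\log N}$), use Bobkov--Ledoux's sharp one-dimensional isoperimetric-type estimates for log-concave measures to control $1/f$ on this region in terms of $\sigma$, and handle the tails by Bernstein-type deviation bounds. The affine-invariant rescaling $x \mapsto x/\sigma$ reduces everything to isotropic log-concave measures, which is why $\sigma^2$ appears as a prefactor. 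Combining the truncated bound with the tail estimate yields the stated inequality; we refer the reader to \cite{BLe19} for the full execution.
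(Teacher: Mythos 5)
The paper also establishes this lemma by direct citation to Corollary 6.12 of \cite{BLe19} and gives no further proof, so your proposal matches the paper's treatment exactly. As a minor remark, the intermediate inequality you label the ``Dall'Aglio formula,'' namely $\cW_2^2(\hat\mu_N,\mu)\le\int_{\RR}(F_N-F)^2/f\,\rd x$, is not the standard Dall'Aglio quantile identity (that is the equality $\cW_2^2=\int_0^1(F_N^{-1}-F^{-1})^2\,\rd u$ you wrote first) and would require its own justification, but since the proof is ultimately by citation this imprecision is harmless.
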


\Cref{lem-additivity-W2,lem-W2-concentration} yield the following corollary.

\begin{corollary}\label{cor-W2-concentration}
	Let Assumption \ref{assumption-regularity} hold, $N \geq 2$, and $\bX \in \RR^{m \times N}$ be a random array. Suppose that the columns of $\bX$ are drawn independently from a product distribution $q$, whose marginals are log-concave with variance bounded by $\sigma^2$. There exists a universal constant $c$ such that
	\[
	\EE \cW_2^2 ( q_{\bX}, q ) \leq \frac{c m \sigma^2 \log N }{ N} ,
	\]
	where $q_{\bX}$ is constructed according to \Cref{defn-particles}.
\end{corollary}

\section{Discussions}\label{sec: Discussions}

We introduced PArticle VI (PAVI), a particle algorithm for mean-field variational inference (MFVI). The algorithm is based on formulating the relevant optimization problem in terms of Fokker-Planck equations and corresponding diffusion processes. Our theoretical analysis provides non-asymptotic $\cW_2$ error bounds for the algorithm. Several future directions are worth pursuing. For instance, one may consider relaxing the regularity assumptions in \Cref{thm-main} and adapting PAVI to the setting of parametric MFVI.
Another interesting direction is to develop accelerated versions of PAVI using the recipe in the recent works \citep{wang2022accelerated,chen2023accelerating}.

\section*{Acknowledgement}
The authors thank Daniel Lacker for helpful discussions. Du's research is supported in part by National Science Foundation grants DMS-2309245 and DMS-1937254, and the Department of Energy grant DE-SC0025347. Wang's research is supported by the National Science Foundation grant DMS-2210907.

%\newpage 
%\appendix

{
\bibliographystyle{ims}
\bibliography{references_abbr}
}

\end{document}